\definecolor{sealbrown}{rgb}{0.2, 0.08, 0.08}
\numberwithin{equation}{section}
\newtheorem{thm}{Theorem}[section]
\newtheorem*{mainthm}{Main Theorem}
\newtheorem{lem}[thm]{Lemma}
\newtheorem{cor}[thm]{Corollary}
\newtheorem{prop}[thm]{Proposition}
\theoremstyle{definition}
\newtheorem*{exA}{Example A}
\newtheorem*{exB}{Example B}
\newtheorem*{exC}{Example C}
\newtheorem*{exD}{Example D}
\newtheorem{dfn}[thm]{Definition}
\theoremstyle{definition}
\newtheorem{rem}[thm]{Remark}
\newcommand{\co}{\mathrm{c}_0}
\newcommand{\coo}{\mathrm{c}_{00}}
\newcommand{\diam}{\ensuremath{\mathrm{diam}}}
\newcommand{\vertiii}[1]{{\left\vert\kern-0.25ex\left\vert\kern-0.25ex\left\vert #1 
    \right\vert\kern-0.25ex\right\vert\kern-0.25ex\right\vert}}
\DeclareSymbolFont{fouriersymbols}{FMS}{futm}{m}{n}
\DeclareSymbolFont{fourierlargesymbols}{FMX}{futm}{m}{n}
\DeclareMathDelimiter{\VERT}{\mathord}{fouriersymbols}{152}{fourierlargesymbols}{147}
\begin{document}

\title[On nonexpansiveness]{$\ell_1$ spreading models and the FPP for Ces\`aro mean nonexpansive maps}

\author{C.~S.~Barroso}
\address{C.~S.~Barroso, Department of Mathematics, Federal University of Cear\'a, Fortaleza, CE 60455360, Brazil}
\email{cleonbar@mat.ufc.br}
\thanks{The author was partially supported by CNPq/CAPES}


\keywords{}
\subjclass[2010]{47H10, 46B20, 46B45}

\maketitle

\begin{abstract} Let $K$ be a nonempty subset of a Banach space $X$. A mapping $T\colon K\to K$ is called $\mathfrak{cm}$-{\it nonexpansive} if for any sequence $(u_i)_{i=1}^\infty$ and $y$ in $K$, $\limsup_{i\to\infty} \sup_{A\subset\{1,\dots, n\}}\|\sum_{k\in A} \big(T u_{i+k} - Ty\big)\|\leq \limsup_{i\to\infty} \sup_{A\subset\{1,\dots, n\}}\|\sum_{k\in A} (u_{i+k} - y)\|$ for all $n\in\mathbb{N}$. As a subclass of the class of nonexpansive maps, its FPP is well-established in a wide variety of spaces. The main result of this paper is a fixed point result relating $\mathfrak{cm}$-nonexpansiveness, $\ell_1$ spreading models and Schauder bases with not-so-large basis constants. As a consequence, we deduce that Banach spaces with the weak Banach-Saks property have the fixed point property for $\mathfrak{cm}$-nonexpansive maps. 
\end{abstract}


\section{Introduction}

\smallskip 

A central question in Banach space theory is to find out which relevant structures can be linked to the geometry of Banach spaces. Structures such as those arising from unconditional bases are of prime relevance, although according to Gowers-Maurey's celebrated result \cite{GM} they are not always available. Despite this, one can always resort to them asymptotically along weakly null sequences. On this concern spreading models have played a crucial role. In \cite{BS}, as e.g. it is proved that every bounded sequence $(x_n)$ in a Banach space $X$ generates a spreading model, that is, there exist a subsequence $(x_{n_i})$, a seminormed space $(E,\vertiii{\cdot})$ ({\it spreading model}) and a sequence $(e_i)\subset E$ such that, for any $n\in\mathbb{N}$ and $\varepsilon>0$, there exists $n_0\in\mathbb{N}$ so that for all $n_0\leq \kappa_1< \kappa_2< \dots < \kappa_n$ and $(a_i)_{i=1}^n\in [ -1, 1]^n$,
\[
\Bigg| \Big\| \sum_{i=1}^n a_i x_{\kappa_i}\Big\| - \Big\VERT\sum_{i=1}^n a_i e_i\Big\VERT\Bigg|< \varepsilon.
\]
The sequence $(e_i)$ ({\it fundamental sequence}) enjoys the $1$-spreading property, that is, it is $1$-equivalent to its subsequences. The unit vector bases of $\mathrm{c}_0$ and $\ell_p$ have this property. It is well known that $1$-spreading weakly null sequences are suppression $1$-unconditional, and so $2$-unconditional. So, as spreading models asymptotically surround bounded sequences it is quite natural to expect them to enjoy the same benefit. As it turns out, this happen when $(x_n)$ is semi-normalized and weakly null \cite[Lemma 2]{Beau}. It is worth noting though that weakly null sequences without unconditional subsequences exists, as witnessed by Maurey and Rosenthal  \cite{M-R}. We refer to the works \cite{AG01, BS, BL, FHHMZ} for more on spreading models and the behavior of semi-normalized weakly null sequences in Banach spaces.

\subsection{Unconditionality and FPP} Asymptotic unconditionality plays a crucial role in metric {\it fixed point problems}. As a sample, Lin \cite[Theorem 3]{Lin} showed that the lack of $\ell_1$ spreading models is crucial for obtaining the FPP in super-reflexive spaces with a   suppression $1$-unconditional basis. Recall that a Banach space $X$ is said to have the FPP (weak-FPP) if for every bounded closed (weakly compact) convex set $C\subset X$, every nonexpansive map $T\colon C\to C$ has a fixed point. The affinities between FPP and semi-normalized weakly null sequences were also studied by García Falset \cite[Theorem 3]{GF97} who proved that $R(X)<2$ implies weak-FPP, where
\[
R(X)=\sup\big\{ \liminf_n \|x_n +x\|\big\}
\] 
with the sup taken over all $x\in B_X$ and all weakly null sequences $(x_n)$ in $B_X$. A major topological property that obstructs $\ell_1$ spreading models is the {\it Banach-Saks property} (BSP) (\cite{Beau, BL}). The space $X$ has BSP if every bounded sequence $(x_n)$ has a subsequence $(y_n)$ such that $(1/n)\sum_{i=1}^n y_i$ is norm-convergent. In its weak form (weak-BSP) it is predicted that every weakly null sequence contains a Ces\`aro norm-null subsequence. Banach and Saks \cite{BaSk} proved that $L_p[0,1]$ ($1<p<\infty$) has BSP. Clearly BSP implies weak-BSP. Also, spaces with BSP are reflexive \cite{NW} and super-reflexive spaces have BSP \cite{Kaku}. BSP and weak-BSP are closely tied to a geometric notion due to Beck \cite{Beck62}: $X$ is called $B$-{\it convex} if for some $n\geq 2$ and $\varepsilon>0$, and all vectors $(x_i)_{i=1}^n$ in $X$ one has $\|(1/n)\sum_{i=1}^n\epsilon_i x_i\|\leq (1- \varepsilon)\max_{i=1,\dots, n}\| x_i\|$ for some choice of signs $(\epsilon_i)_{i=1}^n\in \{-1, 1\}^n$. Beck showed that $B$-convex spaces properly contain all uniformly convex spaces. Thus Hilbert spaces, $\ell_p$ and $L_p[0,1]$ ($1<p<\infty$) are $B$-convex spaces. In contrast, $\ell_1$, $\ell_\infty$ and $\mathrm{c}_0$ are not (see \cite{Giesy66}). Rosenthal \cite{Ros} showed that $B$-convexity implies weak-BSP. It is worth noting that $B$-convexity is equivalent to the lack of $\ell_1$-finite representability \cite{Giesy66}. In particular, super-reflexive spaces are $B$-convex. Further aspects of $B$-convexity including geometric descriptions can be found in \cite{Beck62,James64,Giesy66,GJ73,James74,James78}; see also \cite{GF92, GFFN} for its usefulness in metric fixed point theory.

\subsection{FPP and $\mathfrak{cm}$-nonexpansive maps} One of the most hard problems in metric fixed point theory is to obtain sufficient conditions ensuring the FPP for a class of maps. This problem is even more hard to solve when dealing with nonexpansive maps. There are still several open issues. For example, it is not known whether reflexive (or even super-reflexive) spaces have the FPP for such maps. Garc\'ia Falset \cite{GF92} proved that Banach spaces with weak-BSP and a {\it strongly} bimonotone basis have weak-FPP for nonexpansive maps. The main goal of this paper is to sharply improve this result by considering the class of Ces\`aro mean nonexpansive ($\mathfrak{cm}$-nonexpansive) maps.  

\vskip .1cm 

\begin{dfn} Let $C$ be a bounded and convex subset of a Banach space $X$. We say that a nonexpansive mapping $T\colon C\to C$ is $\mathfrak{cm}$-nonexpansive if
\[
\limsup_{i\to\infty} \sup_{A\subset\{1,\dots, n\}}\Big\|\sum_{k\in A} \big(T u_{i+k} - Ty\big)\Big\|\leq \limsup_{i\to\infty} \sup_{A\subset\{1,\dots, n\}}\Big\|\sum_{k\in A} (u_{i+k} - y)\Big\|,
\]
for all $n\in\mathbb{N}$, $y\in C$ and all sequences $(u_i)_{i=1}^\infty \subset C$. 
\end{dfn}

\newpage

Our main result states that if $X$ fails the weak-FPP for $\mathfrak{cm}$-nonexpansive maps then it has an $\ell_1$-spreading model. In consequence, every Banach space with weak-BSP has the weak-FPP for $\mathfrak{cm}$-nonexpansive maps. Hence the basis requirement is not necessary here. The approach we take to prove this result is inspired by \cite{GF92, Lin}, but relies on some relatively new but implicit technicalities related to minimal sets (see Lemma \ref{lem:3sec3}). 

\vskip .1cm 
\noindent As far as we know, the only results with some degree of similarity to this class of maps are those proved by Amini-Harandi \cite{AminiH} and Dowling \cite{Dowl}. 

\begin{rem} Amini-Harandi \cite{AminiH} called a mapping $T\colon C\to C$ as {\it alternate convexically nonexpansive} ($\mathfrak{ac}$-nonexpansive) if 
\begin{equation}\label{eqn:AminiH}
\Bigg\|\sum_{i=1}^n\frac{(-1)^{i+1}}{n}Tx_i - Ty\Bigg\|\leq \Bigg\|\sum_{i=1}^n\frac{(-1)^{i+1}}{n}x_i - y\Bigg\|,
\end{equation}
for all $n\in\mathbb{N}$ and for all $x_1, x_2, \dots, x_n, y\in C$. He proved that strictly convex spaces have weak-FPP for $\mathfrak{ac}$-nonexpansive maps. Soon after, Dowling \cite{Dowl} developed an improvement that, in addition to covering spaces with the Kadec-Klee property, requires property (\ref{eqn:AminiH}) only for $n=2$. He also showed that Alspach's isometry \cite{Als} is not $2$-$\mathfrak{ac}$-nonexpansive. 
\end{rem}

\begin{rem} It is readily seen that if $T\colon C\to C$ satisfies 
\begin{equation}\label{eqn:1rem3}
\Big\|\sum_{i=1}^n (Tx_i - Ty_i)\Big\|\leq \Big\|\sum_{i=1}^n(x_i - y_i)\Big\|,
\end{equation}
for all sequences $(x_i)_{i=1}^n$ and $(y_i)_{i=1}^n$ in $C$, then $T$ is $\mathfrak{cm}$-nonexpansive. 
\end{rem}


\vskip .1cm 

\begin{exA} It is easily seen that affine nonexpansive maps $T\colon C\to C$ satisfy (\ref{eqn:1rem3}) and hence are $\mathfrak{cm}$-nonexpansive.
\end{exA}

\begin{rem} Recall \cite{CKOW} that a Banach space $X$ is called {\it expand-contract plastic space} ($EC$-space) if its unit ball $B_X$ is plastic, which means to say that every nonexpansive bijection $T\colon B_X\to B_X$ is an isometry. The list of spaces with $EC$-property includes strictly convex spaces, $\mathrm{c}$, $\ell_1$ and many others (see e.g. \cite{CKOW, KZ1, KZ2} and references therein). 
\end{rem}

\vskip .1cm 

\begin{exB} Let $X$ be a $EC$-Banach space. Then every nonexpansive bijection $T\colon B_X\to B_X$ is $\mathfrak{cm}$-nonexpansive. Indeed, since $B_X$ is plastic $T$ is an isometry. Also one has $T(0)=0$ (cf. \cite[Theorem 2.3]{CKOW}). By a result of Mankiewicz \cite{Manki} $T$ extends to a linear isometry in the whole of $X$. Using the linearity of such an extension it is shown that $T$ is $\mathfrak{cm}$-nonexpansive. 
\end{exB}

\vskip .1cm 

\begin{exC} The function $T\colon [0,1]\to [0,1]$ given by $T(t)= \max(t,1/2)$ is $\mathfrak{cm}$-nonexpansive (cf. Appendix \ref{sec:apdx}).
\end{exC}

\vskip .1cm 

\begin{exD} Let $K=\big\{ u\in L_1[0,1] \colon 0\leq u\leq 1\big\}$ and consider the  map $T\colon K\to K$ defined by
\[
Tu(t) = \frac{u(t)}{2}\cdot \int_0^t u(s)ds\quad \mathrm{a.e.\, in}\; [0,1].
\]
Then $T$ is $\mathfrak{cm}$-nonexpansive (cf. Appendix \ref{sec:apdx}). Notice that $K$ is weakly compact and $T(0)=0$. 
\end{exD}

\vskip .1cm 

\subsection{Organization of the paper} We close this section by describing the content of the paper. Section \ref{sec:2} is devoted to background materials where the notation and some basic facts are recalled. In Section \ref{sec:3} we state and prove our main result. Some consequences and final considerations are provided in Sections \ref{sec:4}, \ref{sec:5} and \ref{sec:apdx}. 

\vskip .2cm 

\section{Preliminaries}\label{sec:2}

In this section we set up the background material, referring to \cite{AK, FHHMZ, Singer} for more details. 

\subsection{Notation and terminology} For a Banach space $X$ we denote by $B_X$ its closed unit ball and by $X^*$ its dual space. A sequence $(x_i)$ in $X$ is called  {\it basic} if it is a {\it Schauder} basis for its closed linear span $\llbracket  x_n \rrbracket$. For a Schauder basis $(e_i)$ in $X$ denote by $(e^*_i)\subset X^*$ its coefficient functionals. For $n\in\mathbb{N}$ the $n$-th basis projection is the operator $P_n(x)=\sum_{i=1}^n e^*_i(x)e_i$. The {\it basis constant} of $(e_i)$ is the number $\mathsf{K}_m = \sup_{n\in\mathbb{N}}\|P_n\|$. For $F\subset \mathbb{N}$, the projection $P_F$ over $F$ is the operator $P_F(x) = \sum_{i\in F} e^*_i(x)e_i$. The number $\mathsf{K}_b=\sup_{m<n}\|P_{[m,n]}\|$ is called the {\it bimonotonicity constant} of $(e_i)$. The constant $\mathsf{K}_{sb}=\sup_{m<n}\max( \| P_{[m,n]}\|, \| I - P_{[m,n]}\|)$ is called the {\it strong bimonotonicity projection constant} of $(e_i)$, where $I$ is the identity operator and $[m,n]=\{m, m+1,\dots, n\}$. The basis $(e_i)$ is said to be monotone (resp. bimonotone, strongly bimonotone) if $\mathsf{K}_m=1$ (resp. $\mathsf{K}_b=1$, $\mathsf{K}_{sb}=1$). It is called unconditional if there is  $\mathcal{D}\geq 1$ so that $\| \sum_{i=1}^\infty \epsilon_i a_i e_i\|\leq \mathcal{D}\|\sum_{i=1}^\infty a_i e_i\|$ for all signs $\epsilon_i\in \{\pm 1\}$ and scalars $(a_i)_{i=1}^\infty\in \coo$. The smallest such $\mathcal{D}$ is the {\it unconditional constant} of $(e_i)$. The Banach-Mazur distance between isomorphic Banach spaces $X$ and $Y$ is defined by $\mathrm{d}_{BM}(X, Y)=\inf\big\{ \|\mathcal{L}\| \|\mathcal{L}^{-1}\| \colon \mathcal{L}\colon X\to Y \text{ is a linear isomorphism}\big\}$. Let us recall that $X$ is said to have the {\it bounded approximation property} (BAP) if there exists a $\lambda\geq 1$ so that for every $\varepsilon>0$ and every compact $K\subset X$, one can find an operator $T\in\mathfrak{F}(X)$ so that $\|T\|\leq \lambda$ and $\|Tx -x\|<\varepsilon$ for every $x\in K$, where $\mathfrak{F}(X)$ is the space of finite rank operators on $X$.  

\vskip .2cm 

\subsection{Ultrapower methods and minimal sets} Let $(X,\| \cdot\|)$ be Banach space. We will denote by $[X]$ the quotient space $\ell_\infty(X)/\co(X)$ endowed with the quotient norm given by $\| [v_i] \|:=\limsup_{i\to\infty}\| v_i\|$, where $[v_i]$ denotes the equivalent class whose representative is $(v_i)$. Let $C$ be a weakly compact convex subset of $X$ and assume that $T\colon C\to C$ is a nonexpansive mapping without fixed points. By Zorn's we find a closed convex set $K\subset C$ which is a minimal $T$-invariant set. As usual, the ultra-set $[K]$ of $K$ is defined by $[K]:=\{ [v_i] \in [X] \colon v_i \in K\, \forall i\in \mathbb{N}\}$ and the ultra-mapping $[T]\colon [K] \to [K]$ of $T$ is given by $[T]([v_i])= [T(v_i)]$. Minimal sets have many useful properties (cf. \cite{Ak-K,KSims}) and they are widely used in the study of weak-FPP. One of them is following lemma due to Goebel \cite{Goe} and Karlovitz \cite{Kar}.

\vskip .1cm

\begin{lem}[Goebel-Karlovitz]\label{lem:1sec2} Let $K$ be a minimal weakly compact convex set for a nonexpansive fixed-point free mapping $T$. Then
\[
\lim_{n\to\infty} \| y_n-x\|=\mathrm{diam\,} K
\] 
for all $x\in K$ and any approximate fixed point sequence $(y_n)$ of $T$.
\end{lem}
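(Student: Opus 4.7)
My plan is to make double use of the minimality of $K$: first to show that the function $\phi(x)=\limsup_{n}\|y_n-x\|$ is constant on $K$, and then to prove that this constant equals $\diam(K)$ via a Chebyshev-type subset. Approximate fixed point sequences (a.f.p.s., i.e., sequences $(y_n)\subset K$ with $\|y_n-Ty_n\|\to 0$) exist in $K$ by applying Banach's contraction theorem to $T_t=tz_0+(1-t)T$ for a fixed $z_0\in K$ and letting $t\to 0^+$.

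Given such a $(y_n)$, the function $\phi$ is convex and $1$-Lipschitz, hence weakly lower semicontinuous, and therefore attains its infimum $r$ on the weakly compact set $K$. The level set $M=\{x\in K:\phi(x)=r\}$ is nonempty, closed and convex. Using $\|y_n-Ty_n\|\to 0$ together with the nonexpansiveness of $T$, one checks $\phi(Tx)=\limsup_n\|Ty_n-Tx\|\le \limsup_n\|y_n-x\|=\phi(x)$, so $M$ is $T$-invariant, and minimality of $K$ forces $M=K$; thus $\phi\equiv r$ on $K$.

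The main obstacle is then to prove $r=\diam(K)$, for which I would introduce the Chebyshev-type set
\[
W=\{w\in K:\|w-x\|\le r\text{ for every }x\in K\}.
\]
Any weak cluster point $w$ of $(y_n)$ in the weakly compact set $K$ satisfies $\|w-x\|\le\liminf_n\|y_n-x\|\le r$, so $W$ is nonempty (and is clearly closed and convex). The crux is to show that $W$ is $T$-invariant. This requires the auxiliary density $\overline{\mathrm{conv}}(T(K))=K$, which itself follows from minimality since $\overline{\mathrm{conv}}(T(K))$ is closed, convex, nonempty and carried into $T(K)\subset \overline{\mathrm{conv}}(T(K))$ by $T$. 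For $w\in W$ and $x\in K$, approximating $x$ by convex combinations $\sum_i\lambda_i Tu_i$ with $u_i\in K$ and invoking nonexpansiveness gives
\[
\Big\|Tw-\sum_i\lambda_i Tu_i\Big\|\le\sum_i\lambda_i\|Tw-Tu_i\|\le\sum_i\lambda_i\|w-u_i\|\le r,
\]
so $Tw\in W$ in the limit. Minimality then forces $W=K$, i.e., $\diam(K)\le r$; combined with the trivial $r\le\diam(K)$ this yields $r=\diam(K)$. Finally, to upgrade $\limsup$ to $\lim$, I would note that any subsequence of an a.f.p.s.\ is again an a.f.p.s., so the preceding argument gives $\limsup_n\|y_n-x\|=\diam(K)$ along every subsequence; combined with the trivial bound $\|y_n-x\|\le\diam(K)$ this forces $\lim_n\|y_n-x\|=\diam(K)$.
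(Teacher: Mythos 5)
Your proof is correct. Note that the paper does not prove this lemma at all: it is quoted as a known result of Goebel and Karlovitz, with references, so there is no in-paper argument to compare against. What you have written is essentially the classical Karlovitz proof — minimality applied once to the level set of the convex, weakly lower semicontinuous functional $\phi(x)=\limsup_n\|y_n-x\|$ to make it constant, and a second time to the Chebyshev-type set $W$ (together with $\overline{\mathrm{conv}}\,T(K)=K$) to identify the constant with $\mathrm{diam}\,K$ — and all the steps, including the final upgrade from $\limsup$ to $\lim$ via subsequences, are sound. The only cosmetic slip is in the nonemptiness of $W$: weak lower semicontinuity gives $\|w-x\|\le\liminf_k\|y_{n_k}-x\|$ along the subsequence converging weakly to $w$, not the $\liminf$ of the full sequence, but since that quantity is still bounded by $\limsup_n\|y_n-x\|=r$ the conclusion is unaffected.
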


\medskip 

\noindent Recall that $(y_n)$ approximate fixed points of $T$ if 
\[
\lim_{n\to\infty}\|y_n - Ty_n\|=0. 
\]
Notice in this case that $[T]([y_i])=[y_i]$. The next lemma (due to Lin \cite{Lin}) display another property of minimal sets. 

\begin{lem}[Lin]\label{lem:2sec2} Let $K$ be a weakly compact convex set in a Banach $X$ and $T$ a nonexpansive mapping on $K$. Assume that $K$ is minimal for $T$ and $\mathcal{M}\subseteq [K]$ is a nonempty closed convex subset such that $[T](\mathcal{M})\subseteq \mathcal{M}$. Then 
\[
\sup\big\{ \|[v_i]- x\|\colon [v_i]\in \mathcal{M}\big\}= \mathrm{diam\,} K\;\; \text{ for all } x\in K.
\]
\end{lem}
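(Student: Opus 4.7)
The plan is to fix $x\in K$ and show that the quantity $f(x):=\sup\bigl\{\|[v_i]-x\|:[v_i]\in\mathcal{M}\bigr\}$ equals $D:=\mathrm{diam\,}K$. The upper bound $f(x)\leq D$ is immediate because every representative of an element of $\mathcal{M}$ lies in $K$. For the reverse inequality I would manufacture an approximate fixed point sequence $(y_n)\subset K$ for $T$ whose distances to $x$ do not asymptotically exceed $f(x)$, and then invoke the Goebel-Karlovitz Lemma \ref{lem:1sec2} to conclude $D=\lim_n\|y_n-x\|\leq f(x)$.

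To produce such a sequence, I would first extract approximate fixed points of the ultra-map $[T]$ inside $\mathcal{M}$. Fix any $[z]\in\mathcal{M}$ and, for each $\varepsilon\in(0,1)$, consider $\Phi_\varepsilon\colon \mathcal{M}\to\mathcal{M}$ defined by $\Phi_\varepsilon([w])=(1-\varepsilon)[T][w]+\varepsilon[z]$. Because $\mathcal{M}$ is closed and convex with $[T](\mathcal{M})\subseteq\mathcal{M}$, and $[T]$ is nonexpansive on $[X]$, the map $\Phi_\varepsilon$ is a well-defined $(1-\varepsilon)$-contraction on the complete space $\mathcal{M}$, so by the Banach principle it admits a unique fixed point $[w^\varepsilon]\in\mathcal{M}$. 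The identity $[w^\varepsilon]-[T][w^\varepsilon]=\varepsilon([z]-[T][w^\varepsilon])$ then gives $\|[w^\varepsilon]-[T][w^\varepsilon]\|\leq \varepsilon D$. Taking $\varepsilon_n\downarrow 0$, we obtain $[w^n]\in\mathcal{M}$ with $\|[w^n]-[T][w^n]\|\to 0$ while $\|[w^n]-x\|\leq f(x)$ for every $n$.

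The last step is a diagonal extraction. Writing $[w^n]=(w_i^n)_i$ and unfolding the ultra-norms yields $\limsup_i\|w_i^n-Tw_i^n\|\leq\varepsilon_n$ and $\limsup_i\|w_i^n-x\|\leq f(x)$, so for each $n$ I can choose an index $i_n$ satisfying simultaneously $\|w_{i_n}^n-Tw_{i_n}^n\|\leq \varepsilon_n+1/n$ and $\|w_{i_n}^n-x\|\leq f(x)+1/n$. Setting $y_n:=w_{i_n}^n\in K$, the sequence $(y_n)$ is an approximate fixed point sequence for $T$, hence Lemma \ref{lem:1sec2} gives $\lim_n\|y_n-x\|=D$; combined with $\|y_n-x\|\leq f(x)+1/n$ this forces $D\leq f(x)$, as desired.

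The only delicate point I foresee is the Banach contraction trick that plants approximate $[T]$-fixed points into the invariant set $\mathcal{M}$; the remainder is bookkeeping between the ultra-norm and its representatives, together with a direct appeal to the Goebel-Karlovitz lemma.
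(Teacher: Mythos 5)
Your proof is correct and follows essentially the same route the paper uses for this circle of ideas (the paper states the lemma without proof, attributing it to Lin, but its Lemma \ref{lem:3sec3} explicitly borrows the same scheme): approximate fixed points of $[T]$ inside the invariant set $\mathcal{M}$, a diagonal extraction down to an approximate fixed point sequence of $T$ in $K$, and then the Goebel--Karlovitz Lemma \ref{lem:1sec2}. The only notable difference is that you obtain the approximate $[T]$-fixed points via Banach's contraction principle, which is legitimate here because $\mathcal{M}$ is assumed closed in the Banach space $[X]$ (hence complete), whereas the paper's Lemmas \ref{lem:Star1sec3}--\ref{lem:Star2sec3} are engineered to produce approximate fixed points without completeness, since the closedness hypothesis is dropped in Lemma \ref{lem:3sec3}.
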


\noindent For further aspects regarding FPP we refer the reader to \cite{Ak-K, GF-JM-LF, KSims}. 

\medskip 

\section{Main result}\label{sec:3}

\smallskip  

This section is devoted to proving our main result. The proof is based on two main auxiliary lemmas. The first one concerns a weakened notion of bounded approximation (cf. \cite[Definition 18.1]{Singer}). The second concerns a diametral property of minimal sets associated to nonexpansiveness. 

\begin{dfn}[EAB] Let $X$ be a Banach space. We say that $X$ has an extended approximative basis (EAB) if there is a net of finite rank operators $(P_\alpha)_{\alpha\in\mathscr{D}}$ on $X$ such that 
\[
x=\lim_{\alpha\in\mathscr{D}}P_\alpha(x)\,\; (x\in X)\quad\text{and}\quad \sup_{\alpha\in\mathscr{D}} \|P_\alpha\|<\infty.
\] 
If $(P_\alpha)_{\alpha\in\mathscr{D}}$ can be chosen with $\sup_\alpha \|P_\alpha\|\leq \lambda$ for some $\lambda\geq 1$, then $X$ is said to have an extended $\lambda$-approximative basis ($\lambda$-EAB). 
\end{dfn}

\begin{rem} Every separable Banach spaces with BAP has $\lambda$-EAB. 
\end{rem}

\begin{lem}\label{lem:A} Let $X$ be a Banach space with a $\lambda$-EAB $(P_\alpha)_{\alpha\in\mathscr{D}}$. Assume that $K$ is a separable subset of $X$ and $(y_n)$ is a semi-normalized weakly null sequence in $K$. Then there exist an increasing sequence $(\alpha_{m_i})_{i\geq 0}$ in $\mathscr{D}$ and a basic subsequence $(x_{n_i})$ of $(y_n)$ such that  
\begin{itemize}
\item[(\emph{1})] $\lim_{i\to\infty}\big\|P_{\alpha_{m_i}}\big((1/N)\sum_{k=1}^N x_{n_{i+k}}\big)\big\|=0$ for all $N\in\mathbb{N}$. 
\item[(\emph{2})] $\lim_{i\to\infty} \| x_{n_i} - P_{\alpha_{m_i}}(x_{n_i})\|=0$. 
\item[(\emph{3})] $\lim_{i\to\infty} \| x - P_{\alpha_{m_i}}(x)\|=0$ for all $x\in K$. 
\end{itemize}
\end{lem}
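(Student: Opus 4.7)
The proof will be a simultaneous diagonal induction producing a subsequence $(x_{n_i})$ of $(y_n)$ and an increasing sequence $(\alpha_{m_i})$ in $\mathscr{D}$. Two preliminary reductions: (i) by the Bessaga--Pe\l{}czy\'nski selection principle applied to the semi-normalized weakly null sequence $(y_n)$, we may replace $(y_n)$ by a basic subsequence and assume from the outset that $(y_n)$ itself is basic, so that every further subsequence will automatically be basic; (ii) by separability of $K$, fix a countable dense set $\{z_j\}_{j\geq 1}\subset K$.

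For the induction, start with any $\alpha_{m_0}\in\mathscr{D}$. Having chosen $n_1<\cdots<n_{i-1}$ and $\alpha_{m_0}\leq\cdots\leq\alpha_{m_{i-1}}$, first pick $n_i>n_{i-1}$ such that
\[
\|P_{\alpha_{m_j}}(y_{n_i})\|<1/i\qquad\text{for every }\;0\leq j\leq i-1;
\]
this is feasible because each $P_{\alpha_{m_j}}$ has finite rank and so sends the weakly null sequence $(y_n)$ to a norm-null sequence. Then, using the directedness of $\mathscr{D}$ together with the pointwise convergence $P_\alpha(x)\to x$, pick $\alpha_{m_i}\geq\alpha_{m_{i-1}}$ satisfying
\[
\|z_j-P_{\alpha_{m_i}}(z_j)\|<1/i\;\;(1\leq j\leq i)\quad\text{and}\quad\|y_{n_i}-P_{\alpha_{m_i}}(y_{n_i})\|<1/i.
\]
Set $x_{n_i}:=y_{n_i}$. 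Then (2) is built in. For (3), given $x\in K$ and $\varepsilon>0$, choose $z_j$ with $\|x-z_j\|<\varepsilon$; for $i\geq j$ the triangle inequality yields $\|x-P_{\alpha_{m_i}}(x)\|\leq(1+\lambda)\|x-z_j\|+1/i\leq(1+\lambda)\varepsilon+1/i$, whence $\limsup_i\|x-P_{\alpha_{m_i}}(x)\|=0$ on letting $\varepsilon\to 0$. For (1), whenever $j<\ell$ we have $\|P_{\alpha_{m_j}}(x_{n_\ell})\|<1/\ell$, so for any fixed $N$,
\[
\Big\|P_{\alpha_{m_i}}\Big(\tfrac{1}{N}\sum_{k=1}^{N}x_{n_{i+k}}\Big)\Big\|\leq\tfrac{1}{N}\sum_{k=1}^{N}\tfrac{1}{i+k}\leq\tfrac{1}{i+1}\to 0,
\]
uniformly in $N$.

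The only real subtlety is orchestrating the dependencies: $n_i$ must be chosen \emph{after} $\alpha_{m_0},\dots,\alpha_{m_{i-1}}$ are fixed, so that weak nullity of $(y_n)$ against those finitely many finite-rank operators can be exploited; while $\alpha_{m_i}$ must be chosen \emph{after} $n_i$, so that $y_{n_i}$ can be added to the finite set on which $P_{\alpha_{m_i}}$ is required to be near the identity. Since $\lambda$-EAB provides only a net rather than a sequence of projections, the directedness of $\mathscr{D}$ is invoked at every step to keep $(\alpha_{m_i})$ monotone; with these pieces in the right order, the rest is elementary triangle-inequality bookkeeping.
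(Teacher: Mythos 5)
Your proof is correct, and its core --- Bessaga--Pe\l{}czy\'nski to get a basic subsequence, then a gliding-hump induction alternating between ``push $P_{\alpha_{m_j}}(y_{n_i})$ down using weak nullity against finitely many finite-rank operators'' and ``push $\|y_{n_i}-P_{\alpha_{m_i}}(y_{n_i})\|$ down using pointwise convergence of the net'' --- is exactly the engine of the paper's proof. The one genuine difference is how the net $(P_\alpha)_{\alpha\in\mathscr{D}}$ is tamed into a sequence. The paper invokes the proof of Singer's Theorem~18.2 to extract, once and for all, an increasing sequence $(\alpha_n)$ that is an extended $\lambda$-approximative basis for a separable subspace $F\supseteq\overline{\mathrm{span}}(K)$, after which property (\emph{3}) is immediate and the gliding hump runs over an honest sequence. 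You instead stay with the net throughout, invoking directedness of $\mathscr{D}$ at each inductive step to keep $(\alpha_{m_i})$ monotone, and you recover (\emph{3}) by hand from a countable dense subset $\{z_j\}\subset K$ together with the uniform bound $\sup_\alpha\|P_\alpha\|\leq\lambda$ via $\|x-P_{\alpha_{m_i}}(x)\|\leq(1+\lambda)\|x-z_j\|+1/i$. Your route is more self-contained (no external citation) at the cost of re-proving a special case of Singer's reduction; the paper's route cleanly separates the ``net to sequence'' step from the combinatorics. Both are sound, and your quantitative bookkeeping for (\emph{1}) and (\emph{2}) checks out.
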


\begin{proof} By a result of Bessaga and Pe\l czy\'nski \cite{BP} there is a basic subsequence $(x_{n})$ of $(y_n)$. Let $G=\overline{\textrm{span}}(K)$ (the closed linear span of $K$). Then $G$ is separable. By the proof of Theorem 18.2 in \cite[p. 606]{Singer} there exist an increasing sequence $(\alpha_n)_{n=1}^\infty$ in $\mathscr{D}$ and a separable subspace $F$ of $X$ which contains $G$ and is such that $(P_{\alpha_n})_{n\in\mathbb{N}}$ is an extended $\lambda$-approximative basis for $F$. That is, $\|P_{\alpha_n}\|\leq \lambda$ for all $n\geq 1$ and
\begin{equation}\label{eqn:1lemA}
z = \lim_{n\to\infty} P_{\alpha_n}(z)\quad (z\in F).
\end{equation} 
We now employ the standard gliding hump method. As $(x_n)$ is weakly null and $P_{\alpha_1}$ has finite rank there must exist $n_1\in \mathbb{N}$ so that $\| P_{\alpha_1}(x_n)\|<  1/2^2$ for all $n\geq n_1$. Since $x_{n_1}\in F$ by (\ref{eqn:1lemA}) there is $m_1> n_1$ so that $\| x_{n_1} - P_{\alpha_{m_1}}(x_{n_1})\|<1/2^2$. As $(x_n)_{n\geq m_1}$ is weakly null and $P_{\alpha_{m_1}}$ has finite rank, there is $n_2> m_1$ so that $\| P_{\alpha_{m_1}}(x_n)\|< 1/2^3$ for all $n\geq n_2$. Again by (\ref{eqn:1lemA}) we deduce that $\| x_{n_2} - P_{\alpha_{m_2}}(x_{n_2})\|< 1/2^3$ for some $m_2> n_2$. Continuing in this manner we obtain an increasing sequence $(\alpha_{m_i})_{i\geq 0}$ in $\mathscr{D}$, $m_0=1$, and a subsequence $(x_{n_i})$ such that for all $i\in\mathbb{N}$, $\|P_{\alpha_{m_i}}(x_n)\|< 1/2^{i+1}$ for all $n\geq n_{i+1}$ and $\| x_{n_i} - P_{\alpha_{m_i}}(x_{n_i})\|< 1/2^{i+1}$. The first set of inequalities clearly shows 
\[
\Bigg\| P_{\alpha_{m_i}}\Bigg(\frac{1}{N}\sum_{k=1}^N x_{n_{i+k}}\Bigg)\Bigg\|\leq \frac{1}{2^i}\quad (\forall i, N\in\mathbb{N}),
\]
and this proves (\emph{1}). Also, the second set of inequalities implies (\emph{2}). Finally, we note that (\emph{3}) follows directly from (\ref{eqn:1lemA}). The proof is complete. 
\end{proof}

\vskip .2cm 

\begin{lem}\label{lem:Star1sec3} Let $\mathcal{M}$ be a bounded and convex subset of a normed space $\mathcal{E}$. Assume that $T\colon \mathcal{M}\to \mathcal{M}$ is a $\lambda$-contraction with $\lambda\in (0,1)$. Then there exists a sequence $(x_n)_{n\in\mathbb{N}}$ such that
\[
\| x_n - T(x_n)\|< \frac{1-\lambda}{n}(1+\diam \mathcal{M}),
\]
and
\[
\| x_n - x_m\|\leq \Big( \frac{1}{n} + \frac{1}{m}\Big)(1 + \diam \mathcal{M})\quad\text{for all } n, m\in\mathbb{N}. 
\]
\end{lem}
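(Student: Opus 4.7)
The plan is to run the classical Picard iteration inside $\mathcal{M}$ and extract a sufficiently ``deep'' subsequence. Fix any $y_0\in\mathcal{M}$ and define recursively $y_{k+1}:=T(y_k)$; all iterates lie in $\mathcal{M}$ since $T(\mathcal{M})\subseteq\mathcal{M}$. An immediate induction, using $\|T(u)-T(v)\|\leq \lambda\|u-v\|$, yields
\[
\|y_{k+1}-y_k\|\leq \lambda^k\|y_1-y_0\|\leq \lambda^k\diam\mathcal{M},
\]
where the second inequality uses that $y_0,y_1\in\mathcal{M}$. Summing the geometric series, for any $k\leq l$,
\[
\|y_k-y_l\|\leq \sum_{j=k}^{l-1}\lambda^j\|y_1-y_0\|\leq \frac{\lambda^k}{1-\lambda}\diam\mathcal{M}.
\]

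For each $n\in\mathbb{N}$ I would pick $k_n\in\mathbb{N}$, with the choice made so that $(k_n)$ is non-decreasing in $n$, large enough that
\[
\lambda^{k_n}\diam\mathcal{M}< \frac{(1-\lambda)(1+\diam\mathcal{M})}{n}.
\]
This is possible because $\lambda^k\to 0$. Set $x_n:=y_{k_n}\in\mathcal{M}$.

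Verification is then mechanical. For the first estimate,
\[
\|x_n-T(x_n)\|=\|y_{k_n}-y_{k_n+1}\|\leq \lambda^{k_n}\diam\mathcal{M}<\frac{(1-\lambda)(1+\diam\mathcal{M})}{n}.
\]
For the second, assume without loss of generality $n\leq m$, so $k_n\leq k_m$, and apply the geometric estimate together with the definition of $k_n$:
\[
\|x_n-x_m\|\leq \frac{\lambda^{k_n}}{1-\lambda}\diam\mathcal{M}< \frac{1+\diam\mathcal{M}}{n}\leq \Big(\frac{1}{n}+\frac{1}{m}\Big)(1+\diam\mathcal{M}),
\]
while the case $n=m$ is trivial.

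I do not anticipate any real obstacle: the statement is essentially a quantitative packaging of the well known fact that Picard iterates of a strict contraction form a Cauchy sequence, and the explicit constants $(1-\lambda)/n$ and $1/n+1/m$ emerge from the geometric series bound together with a single sufficiently large choice of index $k_n$. The only point that demands care is arranging $(k_n)$ to be monotone, which is done inductively while selecting each $k_n$.
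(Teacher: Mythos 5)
Your proof is correct, but it takes a genuinely different route from the paper's. The paper does not run the Picard iteration on $T$ itself: it fixes $u\in\mathcal{M}$, introduces the perturbed maps $F_t(x)=T(tx+(1-t)u)$ for $t\in(0,1)$, shows each $F_t$ has zero minimal displacement (by iterating $F_t$ and estimating $\|F_t^n x - F_t^{n+1}x\|\leq \lambda^n t^n\diam\mathcal{M}$), picks $\varepsilon$-approximate fixed points $x_{t,\varepsilon}$ of $F_t$, converts them into approximate fixed points of $T$ via $\|x_{t,\varepsilon}-T(x_{t,\varepsilon})\|<\varepsilon+(1-t)\diam\mathcal{M}$, and finally obtains the mutual-distance bound by writing $\|x_n-x_m\|\leq \|x_n-Tx_n\|+\lambda\|x_n-x_m\|+\|x_m-Tx_m\|$ and rearranging. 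Your argument replaces all of this with the geometric-series tail estimate for the orbit $y_{k+1}=T(y_k)$ and a choice of sufficiently deep, monotone indices $k_n$; every step checks out, including the strict inequality in the displacement bound and the reduction of the symmetric estimate to the case $n\leq m$. What each approach buys: yours is shorter and, notably, never uses the convexity of $\mathcal{M}$ (the paper needs it to make sense of $F_t$), so it proves a slightly more general statement; the paper's derivation of the second estimate is ``soft'' in that it uses only the displacement bound plus contractivity, so it applies verbatim to \emph{any} sequence with that displacement rate, not just one produced by iteration --- a feature that mirrors the structure of the subsequent Lemma \ref{lem:Star2sec3}. For the lemma as stated, your proof fully suffices.
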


\begin{proof} Fix $u\in \mathcal{M}$ and for $t\in (0,1)$ define $F_t\colon \mathcal{M}\to \mathcal{M}$ by 
\[
F_t(x) = T(tx + (1-t)u),\quad (x\in \mathcal{M}).
\]
Notice after a direct calculation that 
\[
\|F^n_t x - F^{n+1}_t x\|\leq \lambda^n t^n\diam \mathcal{M}\quad \text{for all } x\in \mathcal{M},\, n\in\mathbb{N}.
\]
Consequently 
\[
\mathrm{d}(F_t, \mathcal{M}):=\inf_{y\in \mathcal{M}} \| y - F_t y\|\leq \lambda^n t^n\diam \mathcal{M},
\]
for all $n\in\mathbb{N}$. It follows that each $F_t$ has null minimal displacement, that is, $\mathrm{d}(F_t,\mathcal{M})=0$ for all $t\in (0,1)$. Thus for any $t, \varepsilon\in (0,1)$ there exists $x_{t,\varepsilon}\in \mathcal{M}$ such that $\| x_{t,\varepsilon} - F_t(x_{t,\varepsilon})\|< \varepsilon$. Notice that this together with triangle inequality implies
\begin{equation}\label{eqn:1lem1}
\| x_{t,\varepsilon} - T(x_{t,\varepsilon}) \| < \varepsilon + (1-t)\diam \mathcal{M}. 
\end{equation}
Now for every $n\in\mathbb{N}$ set
\[
\varepsilon_n:=\frac{ 1-\lambda}{n},\quad t_n:= 1- \frac{1-\lambda}{n}\quad\text{and}\quad x_n:=x_{t_n, \varepsilon_n}.
\]
It follows from (\ref{eqn:1lem1}) that for every $n\in\mathbb{N}$,
\[
\| x_n - T(x_n)\|< \varepsilon_n + (1-t_n)\diam \mathcal{M} = \frac{1-\lambda}{n}(1+\diam \mathcal{M}). 
\]
Hence, for $n, m$ in $\mathbb{N}$ we have
\[
\begin{split}
\| x_n - x_m\|&\leq \| x_n - T(x_n)\| + \| T(x_n) - T(x_m)\| + \| x_m - T(x_m)\|\\[1mm]
&\leq \frac{1-\lambda}{n}(1+\diam \mathcal{M}) + \lambda \| x_n - x_m\| + \frac{1-\lambda}{m}(1+\diam \mathcal{M}),
\end{split}
\]
from which one gets that
\[
\|x_n - x_m\|\leq \Big(\frac{1}{n} + \frac{1}{m}\Big)(1 + \diam \mathcal{M}). 
\]
\end{proof}

\begin{rem} It is an immediate consequence of Lemma \ref{lem:Star1sec3} that if $\mathcal{M}$ is also assumed to be complete then $T$ has a unique fixed point (Banach's contraction principle). 
\end{rem}

\begin{lem}\label{lem:Star2sec3} Let $\mathcal{M}$ be a bounded and convex subset of a normed space $\mathcal{E}$. Assume that $T\colon \mathcal{M}\to \mathcal{M}$ is a nonexpansive mapping. Then there exists a sequence $(y_{n})_{n\in\mathbb{N}}$ in $\mathcal{M}$ such that
\[
\| y_{n} - T(y_{n})\|\leq \frac{1 +\diam \mathcal{M}}{n},
\]
for all $n\in\mathbb{N}$.
\end{lem}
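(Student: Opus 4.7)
The natural plan is to reuse the contraction construction from the proof of Lemma \ref{lem:Star1sec3}, applied directly to the nonexpansive map $T$ rather than to an assumed contraction. Fix any $u\in \mathcal{M}$ and, for each $t\in(0,1)$, define
\[
F_t\colon \mathcal{M}\to \mathcal{M},\qquad F_t(x)=T\bigl(tx+(1-t)u\bigr).
\]
Convexity of $\mathcal{M}$ makes $F_t$ well defined, and nonexpansiveness of $T$ gives $\|F_t(x)-F_t(y)\|\leq t\|x-y\|$, so $F_t$ is a genuine $t$-contraction. This is the same map used in the proof of Lemma \ref{lem:Star1sec3}, except that now only nonexpansiveness of $T$ (not a contraction constant) is invoked.

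With $F_t$ in hand, the next step is to reproduce the \emph{null minimal displacement} argument from Lemma \ref{lem:Star1sec3}: iterating the Lipschitz bound yields $\|F_t^n x - F_t^{n+1} x\|\leq t^n\diam\mathcal{M}$ for every $x\in\mathcal{M}$, so $\mathrm{d}(F_t,\mathcal{M})=\inf_{y\in\mathcal{M}}\|y-F_t y\|=0$. Consequently, for every $t\in(0,1)$ and every $\eta>0$ one can select $y_{t,\eta}\in \mathcal{M}$ with $\|y_{t,\eta}-F_t(y_{t,\eta})\|<\eta$; no completeness of $\mathcal{M}$ is needed. (Equivalently, one may simply apply Lemma \ref{lem:Star1sec3} to $F_t$ with contraction constant $t$ and the appropriate index, but the minimal-displacement statement is the cleanest form to quote.)

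Now make the quantitative choice $t_n=1-\frac{1}{n}$ and $\eta_n=\frac{1}{n}$. Pick $y_n:=y_{t_n,\eta_n}\in\mathcal{M}$ so that $\|y_n-F_{t_n}(y_n)\|<\frac{1}{n}$. The remaining step is to translate the bound on $\|y_n-F_{t_n}(y_n)\|$ into a bound on $\|y_n-T(y_n)\|$, which is the place where nonexpansiveness of $T$ enters a second time:
\[
\|y_n-T(y_n)\|\leq \|y_n-F_{t_n}(y_n)\| + \bigl\|T\bigl(t_n y_n+(1-t_n)u\bigr)-T(y_n)\bigr\|
\leq \tfrac{1}{n} + (1-t_n)\|u-y_n\|
\leq \tfrac{1}{n}+\tfrac{1}{n}\diam\mathcal{M},
\]
which gives exactly the asserted estimate $\|y_n-T(y_n)\|\leq (1+\diam\mathcal{M})/n$.

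There is no genuine obstacle here; the only point worth watching is to perturb \emph{inside} the argument of $T$ (the map $F_t(x)=T(tx+(1-t)u)$) rather than outside (the map $T_\lambda(x)=(1-\lambda)u+\lambda T(x)$): both yield contractions, but only the former, combined with nonexpansiveness of $T$, produces the sharp constant $1+\diam\mathcal{M}$ in the numerator instead of the looser $1+2\diam\mathcal{M}$.
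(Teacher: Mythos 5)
Your proof is correct, but it takes a more direct route than the paper. The paper's proof of Lemma \ref{lem:Star2sec3} uses the \emph{outside} perturbation $T_k(x)=(1-\frac{1}{k+1})T(x)+\frac{x_0}{k+1}$, which is a $\lambda_k$-contraction with $\lambda_k=1-\frac{1}{k+1}$, applies Lemma \ref{lem:Star1sec3} to each $T_k$ to get a doubly indexed family $(u_{n,k})$ with $\|u_{n,k}-T_k(u_{n,k})\|<\frac{1+\diam\mathcal{M}}{n(k+1)}$, converts this to $\|u_{n,k}-T(u_{n,k})\|\leq\frac{1+\diam\mathcal{M}}{n(k+1)}+\frac{\diam\mathcal{M}}{k+1}$, and then diagonalizes with $y_n=u_{n,n}$. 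You instead apply the \emph{inside} perturbation $F_t(x)=T(tx+(1-t)u)$ directly to the nonexpansive $T$, observe it is already a $t$-contraction, and rerun the minimal-displacement iteration from inside the proof of Lemma \ref{lem:Star1sec3}; this collapses the two-parameter scheme to one parameter and avoids the diagonalization, while still landing on the constant $1+\diam\mathcal{M}$. Both arguments are sound; the paper's buys modularity (it reuses Lemma \ref{lem:Star1sec3} as a black box), yours buys brevity. Your closing remark is the one inaccurate note: the outside perturbation does \emph{not} inherently force the looser constant $1+2\diam\mathcal{M}$ --- with $y_n$ chosen so that $\|y_n-T_n(y_n)\|<\frac{1}{n}$ one gets $\|y_n-T(y_n)\|\leq\frac{1}{n}+\frac{1}{n}\|u-T(y_n)\|\leq\frac{1+\diam\mathcal{M}}{n}$ as well, and indeed the paper's own bookkeeping ($\frac{1+\diam\mathcal{M}}{n(n+1)}+\frac{\diam\mathcal{M}}{n+1}\leq\frac{1+\diam\mathcal{M}}{n}$) achieves the same numerator. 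This does not affect the validity of your argument.
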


\begin{proof} Fix $x_0\in \mathcal{M}$. For $k\in\mathbb{N}$ define $T_k\colon \mathcal{M}\to \mathcal{M}$ by
\[
T_k(x) = \Big( 1- \frac{1}{k+1}\Big)T(x) + \frac{x_0}{k+1},\quad x\in \mathcal{M}. 
\]
Then $T_k$ is a $\lambda_k$-contraction with $\lambda_k:= 1 - \frac{1}{k+1}$ ($k\in\mathbb{N}$). By Lemma \ref{lem:Star1sec3} there is a sequence $(u_{n,k})_{n\in\mathbb{N}}$ in $K$ such that 
\[
\| u_{n,k} - T_k(u_{n,k})\|< \frac{1 - \lambda_k}{n}(1+\diam \mathcal{M})=\frac{1+\diam \mathcal{M}}{n(k+1)},
\]
and
\[
\| u_{n,k} - u_{m,k}\|\leq \Big( \frac{1}{n} + \frac{1}{m}\Big)(1 + \diam \mathcal{M})\quad\text{for all } n,m\in \mathbb{N}.
\]
Combining the definition of $T_k$ with triangle inequality we have
\[
\| u_{n,k} - T(u_{n,k})\|\leq \frac{1 + \diam \mathcal{M}}{n(k+1)}+ \frac{\diam \mathcal{M}}{k+1}\quad (\forall n,k\in\mathbb{N}).
\]
To conclude the proof it suffices to define $y_{n}:= u_{n,n}$ for all $n\in\mathbb{N}$. 
\end{proof}


\vskip .2cm 


\begin{lem}\label{lem:3sec3} Let $K$ be a minimal weakly compact convex set for a fixed point free $\mathfrak{cm}$-nonexpansive mapping $T\colon K\to K$. Assume that $\mathcal{M}$ is a bounded convex subset of $[K]$. If $\mathcal{M}$ is nonempty and $[T]$-invariant then 
\[
\sup_{[v_i]\in\mathcal{M}}\limsup_{i\to\infty}\sup_{A\subset\{1,\dots, N\}}\Bigg\|\frac{1}{N}\sum_{k\in A}(v_{i+k} - x)\Bigg\|=\diam K
\]
for all $x\in K$ and $N\in\mathbb{N}$.
\end{lem}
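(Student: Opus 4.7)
The upper bound $\leq\diam K$ is immediate by the triangle inequality, since each summand $v_{i+k}-x$ has norm $\leq\diam K$ and the prefactor $|A|/N$ is at most $1$. For the reverse inequality, let
\[
\Phi(x,[v_i]):=\limsup_{i\to\infty}\sup_{A\subset\{1,\ldots,N\}}\Big\|\frac{1}{N}\sum_{k\in A}(v_{i+k}-x)\Big\|,\qquad g(x):=\sup_{[v_i]\in\mathcal{M}}\Phi(x,[v_i]).
\]
For each fixed $A$ and $i$ the map $x\mapsto\|\frac{1}{N}\sum_{k\in A}(v_{i+k}-x)\|$ is convex and $1$-Lipschitz; since $\limsup=\inf_n\sup_{i\geq n}$ preserves convexity (as an infimum of a decreasing family of convex functions) and so do both sup-operations, $g$ is convex and $1$-Lipschitz on $K$. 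A symmetric computation, using shift-invariance of $\limsup$, shows that $\Phi(x,\cdot)$ is $1$-Lipschitz on $\mathcal{M}$ in the quotient norm of $[K]$.

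The plan is then to emulate the Goebel--Karlovitz/Lin argument. The key algebraic fact is that the $\mathfrak{cm}$-nonexpansive inequality, applied with $u_i=v_i$ and $y\in K$, rewrites in our notation as $\Phi(Ty,[Tv_i])\leq \Phi(y,[v_i])$ for all $[v_i]\in\mathcal{M}$ and $y\in K$. Since $\mathcal{M}$ is $[T]$-invariant this at once yields $\sup_{[w_i]\in\overline{[T](\mathcal{M})}}\Phi(Ty,[w_i])\leq g(y)$. To upgrade this to $g(Ty)\leq g(y)$ I would pass to a minimal nonempty closed convex $[T]$-invariant subset $\mathcal{N}\subseteq\mathcal{M}$ via a Zorn-type argument in the quotient Banach space $[X]$, nonemptiness of decreasing chain-intersections being handled either by the natural compactness of bounded sets of $[K]$ in an ultrapower setting or by the concrete construction $\bigcap_n\overline{[T]^n(\mathcal{M})}$. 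Minimality then forces $\overline{[T](\mathcal{N})}=\mathcal{N}$, and the Lipschitz continuity of $\Phi(Ty,\cdot)$ upgrades the sup over $[T](\mathcal{N})$ to a sup over $\mathcal{N}$, giving $g_\mathcal{N}(Ty)\leq g_\mathcal{N}(y)$ with $g_\mathcal{N}:=\sup_\mathcal{N}\Phi(\cdot,\cdot)$. The level set $\{y\in K:g_\mathcal{N}(y)=\min_K g_\mathcal{N}\}$ is now closed, convex, nonempty and $T$-invariant, so by minimality of $K$ it equals $K$; hence $g_\mathcal{N}\equiv r$ on $K$ for some $r\leq\diam K$, and since $g\geq g_\mathcal{N}$ it suffices to prove $r=\diam K$.

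For this final step I would invoke Lemma~\ref{lem:Star2sec3} applied to the nonexpansive map $[T]\colon\mathcal{N}\to\mathcal{N}$, producing $([v_i^{(n)}])\subset\mathcal{N}$ with $\limsup_i\|v_i^{(n)}-Tv_i^{(n)}\|\to 0$ as $n\to\infty$. A diagonal extraction yields a genuine approximate fixed point sequence $(z_m)$ of $T$ in $K$, and Goebel--Karlovitz (Lemma~\ref{lem:1sec2}) then delivers $\|z_m-x\|\to\diam K$ for every $x\in K$. The main obstacle, and the part I expect to be the most delicate, is lifting this to $\Phi(x,[v_i^{(n)}])\to\diam K$: the cheap choice $A=\{1\}$ only delivers $\Phi(x,[v_i^{(n)}])\geq\frac{1}{N}\|[v_i^{(n)}]-x\|\to\diam K/N$, losing the factor $N$. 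The remedy I anticipate is to select representatives whose consecutive $N$-term windows are almost constant, so that taking $A=\{1,\ldots,N\}$ makes $\frac{1}{N}\sum_{k=1}^N(v_{i+k}^{(n)}-x)\approx v_i^{(n)}-x$ with norm approaching $\diam K$. This requires a quantitative refinement of Goebel--Karlovitz, controlling $|\,\|u-x\|-\diam K|$ by a modulus of the displacement $\|u-Tu\|$, combined with the slow-variation freedom afforded by the minimality of $\mathcal{N}$ and the density of $[T](\mathcal{N})$ in $\mathcal{N}$.
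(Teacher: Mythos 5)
There is a genuine gap, located exactly where you flag it: the lower bound $\geq\diam K$ for the $N$-window average is the entire content of the lemma, and your proposal does not prove it. Your remedy --- choosing representatives whose consecutive $N$-term windows are ``almost constant'' plus a quantitative Goebel--Karlovitz --- does not work: the quantity $\Phi(x,[v_i])$ depends only on the class in $\ell_\infty(X)/\co(X)$, so you are not free to stutter or slow down a representative without changing the class (and hence leaving $\mathcal{N}$); and even a quantitative version of Lemma~\ref{lem:1sec2}, giving $\|y_{j,k}-x\|\to\diam K$ for each $k$ separately, cannot by itself rule out cancellation in $\frac1N\sum_{k\in A}(y_{j,k}-x)$ --- preventing that cancellation is precisely the $\ell_1$-type phenomenon the lemma is designed to extract, so it cannot come for free from norm estimates on the individual terms. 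The paper closes this gap by a different mechanism: from an approximate fixed point sequence $(\tilde w_s)$ of $[T]$ in $\mathcal{M}$ it performs a double extraction producing, for each $k=1,\dots,N$, a genuine approximate fixed point sequence $(y_{j,k})_j$ of $T$ in $K$ with $\sup_A\|\frac1N\sum_{k\in A}(y_{j,k}-x)\|\leq d+2\varepsilon_j$; it then shows that the convex function $\alpha_N(y)=\limsup_j\sup_A\|\frac1N\sum_{k\in A}(y_{j,k}-y)\|$ satisfies $\alpha_N(Ty)\leq\alpha_N(y)$ and hence is constant on the minimal set $K$, and identifies the constant as $\diam K$ by passing to weak limits $z_k$ of $(y_{j,k})_j$ and using that the point $\frac1N\sum_{k=1}^N z_k\in K$ is diametral in a minimal set. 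Evaluating $\alpha_N$ at $x$ then gives $\diam K\leq d$. This convexity-plus-weak-limit argument is the missing idea.

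A secondary problem is your detour through a minimal nonempty closed convex $[T]$-invariant subset $\mathcal{N}\subseteq\mathcal{M}$. The set $[K]\subset\ell_\infty(X)/\co(X)$ is bounded, closed and convex but not weakly compact, so Zorn's lemma does not apply (decreasing chains of nonempty closed convex subsets may have empty intersection), and the concrete candidate $\bigcap_n\overline{[T]^n(\mathcal{M})}$ can likewise be empty for a fixed-point-free nonexpansive map on a noncompact set. This step is avoidable --- the paper never passes to a minimal subset of $\mathcal{M}$, it works directly with the approximate fixed point sequence of $[T]$ supplied by Lemma~\ref{lem:Star2sec3} and pushes the monotonicity $\alpha_N(Ty)\leq\alpha_N(y)$ down to the minimal set $K$ itself rather than up to a minimal subset of $[K]$ --- but as written your reduction to ``$g_{\mathcal N}\equiv r$ on $K$'' is not justified. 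The preliminary observations (the trivial upper bound, convexity and $1$-Lipschitzness of $g$, and the reformulation of $\mathfrak{cm}$-nonexpansiveness as $\Phi(Ty,[Tv_i])\leq\Phi(y,[v_i])$) are all correct and consistent with the paper's strategy.
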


\begin{proof} The proof borrows ideas from the proof of Lemma \ref{lem:2sec2}. By Lemma \ref{lem:Star2sec3}, we can select an approximate fixed point sequence for $[T]$ in $\mathcal{M}$, say $(\tilde{w}_s)_{s\in\mathbb{N}}$. Write $\tilde{w}_s = [ w^{(s)}_i]$. Fix $x\in K$ and $N\in\mathbb{N}$. Let's use the notation
\[
\Bigg| \frac{1}{N}\sum_{k=1}^N [w^{(s)}_{i+k}] - x  \Bigg|:= \limsup_{i\to\infty}\sup_{A\subset\{1,\dots, N\}}\Bigg\|\frac{1}{N}\sum_{k\in A}(w_{i+k}^s - x)\Bigg\|.
\]
As in \cite[p. 74]{Ak-K} we will show that $\diam K$ is the only cluster point of the sequence $(|(1/N)\sum_{k=1}^N [w^{(s)}_{i+k}] - x|)_s$. Without loss of generality, we assume that there exists the limit 
\[
d:=\lim_{s\to\infty}\Bigg| \frac{1}{N}\sum_{k=1}^N [w^{(s)}_{i+k}] - x  \Bigg|.
\] 
Write $\delta_s= \|\tilde{w}_s -[T](\tilde{w}_s)\|$ and let $(\varepsilon_j)$ a null sequence in $(0,1)$. Then for every $j\in\mathbb{N}$ there exists an integer $s_j> j$ so that $A^s_j =B^s_j \cap C^s_j$ is infinite for all $s> s_j$, where
\[
B^s_j=\Big\{ m\in\mathbb{N}\colon \sup_{A\subset\{1,\dots, N\}}\Big\|\frac{1}{N}\sum_{k\in A}\big( w^{(s)}_{m+k} - x\big)\Big\|\leq d+ 2\varepsilon_j\Big\}
\]
and
\[
C^s_j=\Big\{  m\in\mathbb{N}\colon  \max_{1\leq k\leq N}\| w^{(s)}_{m+k} - T(w^{(s)}_{m+k})\|\leq \varepsilon_j + \delta_s \Big\}.
\] 
Thus we can find increasing sequences $(s(j))_j$ and $(m(j))_j$ in $\mathbb{N}$ such that 
\begin{equation}\label{eqn:star}
\sup_{A\subset\{1,\dots, N\}}\Big\|\frac{1}{N}\sum_{k\in A}\big( w^{(s(j))}_{m(j)+k} - x\big)\Big\|\leq d+ 2\varepsilon_j
\end{equation}
and
\[
\|  w^{(s(j))}_{m(j)+k} - T(w^{(s(j))}_{m(j)+k})\|\leq \varepsilon_j + \delta_{s(j)},
\]
for all $j\in\mathbb{N}$ and $k=1,\dots, N$. So, each $( w^{(s(j))}_{m(j)+k})_j$ is an approximate fixed point sequence for $T$ in $K$. Passing to a subsequence, we may assume that $( w^{(s(j))}_{m(j)+k})_j$ weakly converges to some $z_k\in K$, for $k=1, \dots, N$. Set 
\[
y_{j,k}:=w^{(s(j))}_{m(j)+k}\quad\text{ for }\,j\in\mathbb{N},\;\; k=1,\dots, N. 
\]
We now claim that 
\begin{equation}\label{eqn:2sec3}
\limsup_{j\to\infty}\sup_{A\subset\{1,\dots, N\}}\Bigg\| \frac{1}{N}\sum_{k\in A}( y_{j,k} - y)\Bigg\|=\diam K\quad\forall y\in K\,\forall N\in\mathbb{N}.
\end{equation}
Indeed, for $N\in\mathbb{N}$ define $\alpha_N\colon K\to \mathbb{R}_+$ by
\[
\alpha_N(y)=\limsup_{j\to\infty} \sup_{A\subset\{1,\dots, N\}}\Bigg\|\frac{1}{N}\sum_{k\in A}(y_{j,k} - y)\Bigg\|\quad y\in K. 
\]
It is not difficult to see that $\alpha_N(Ty)\leq \alpha_N(y)$ for all $y\in K$. Also, notice that $\alpha_N$ is convex. Hence, since $K$ is minimal, $\alpha_N$ is constant. So, for some $c_N\in\mathbb{R}_+$, $\alpha_N(y)=c_N$ for all $y\in K$. Notice that $\big((1/N)\sum_{k=1}^N( y_{j,k}-y)\big)_j$ weakly converges to $(1/N)\sum_{k=1}^N(z_k- y)$. Thus as $\|\cdot\|$ is weak lower semi-continuous we have
\[
\begin{split}
\Bigg\|\frac{1}{N}\sum_{k=1}^N(z_k - y)\Bigg\| &\leq\limsup_{j\to\infty} \Bigg\|\frac{1}{N}\sum_{k=1}^N( y_{j,k} - y)\Bigg\|\\[1mm]
&\leq \limsup_{j\to\infty} \sup_{A\subset\{1,\dots, N\}}\Bigg\|\frac{1}{N}\sum_{k\in A}( y_{j,k} - y)\Bigg\|=c_N.
\end{split}
\]
Therefore
\[
\begin{split}
\diam K=\sup_{y\in K}\Bigg\| \frac{1}{N}\sum_{k=1}^N z_k- y\Bigg\|&\leq c_N\leq \diam K
\end{split}
\]
for all $y\in K$, where in the first equality we used \cite[Theorem 2.4]{Ak-K}. This proves (\ref{eqn:2sec3}). 

\vskip .2cm 
\noindent Therefore by (\ref{eqn:star}) we deduce that
\[
\diam K\leq d
\]
and this easily proves the lemma. 
\end{proof}

\vskip .2cm 

\noindent We are now ready to state and prove the main theorem of this paper. 

\begin{mainthm}\label{thm:M} Let $Z$ be a Banach space having a $\lambda$-EAB $(P_\alpha)_{\alpha\in\mathscr{D}}$ with $\lambda<2$.
Assume that $X$ is a subspace of $Z$ that fails the weak-FPP for $\mathfrak{cm}$-nonexpansive maps. Then $X$ has a spreading model isomorphic to $\ell_1$. 
\end{mainthm}

\begin{proof} Let $\|\cdot\|$ denote the norm of $Z$. By assumption there exist a weakly compact convex set $C\subset X$ and a $\mathfrak{cm}$-nonexpansive map $T\colon C\to C$ with no fixed points. Let $K$ be a minimal weakly compact convex set $K\subset C$ with positive diameter such that $T(K)\subseteq K$. By standard arguments we may assume that $0\in K$, $\diam K=1$ and that there is an approximate fixed point sequence $(x_n)\subset K$ of $T$ weakly converging to $0$. By Lemma \ref{lem:1sec2},
\begin{equation}\label{eqn:3sec3}
\lim_{n\to\infty}\| x_n -y\|=1\quad (\forall\,y\in K).
\end{equation}
Moreover, it is well known that weakly compact convex minimal sets are separable. Then by Lemma \ref{lem:A} and Brunel-Sucheston's result \cite{BS} we find a basic subsequence $(x_{n_i})$ of $(x_n)$ generating a spreading model $\mathbb{E}$, and an increasing sequence $(\alpha_{m_i})$ in $\mathscr{D}$  fulfilling properties (\emph{1})--(\emph{3}). For $i\in\mathbb{N}$ set 
\[
R_i= I - P_{\alpha_{m_i}}.
\] 
\noindent In what follows we shall denote by $\VERT \cdot\VERT$ the norm of $\mathbb{E}$ and by $\mathbf{e}=(e_k)$ its fundamental sequence. Recall that for all scalars $(a_i)_{i=1}^m$ in $\mathbb{R}$ one has
\begin{equation}\label{eqn:5sec3}
\Bigg\VERT \sum_{i=1}^m a_i e_i\Bigg\VERT = \lim_{s_1\to\infty}\,\lim_{s_2\to\infty}\dots \lim_{s_m\to\infty} \Bigg\| \sum_{i=1}^m a_i x_{n_{s_i}}\Bigg\|.
\end{equation}
It follows from this and (\ref{eqn:3sec3}) that $\mathbf{e}$ is normalized. Now pick a constant $d>0$ so that
\begin{equation}\label{eqn:6sec3}
d< \frac{2 - \lambda}{2(1+\lambda)}.
\end{equation}

\vskip .2cm
\noindent We now proceed by following the general ideas of \cite{Lin} and \cite{GF92}, but with a different approach. Fix $m\in\mathbb{N}$ and set

\begin{align*}
\mathcal{M}_m=\left\{  [v_{i}]\in [K]\colon \hskip -.35cm 
\begin{matrix}
&\exists x\in K \textrm{ s.t. }\displaystyle\limsup_{i\to\infty}\sup_{A\subset\{1,\dots, m\}}\Big\|\displaystyle\frac{1}{m}\sum_{k\in A} (v_{i+k}- x)\Big\|\leq d,\\[2.8mm]
& \;\text{ and }\;\displaystyle\frac{1}{m}\limsup_{i\to\infty}\sup_{A\subset\{1,\dots, m\}}\sum_{k\in A}\|v_{i+k} - x_{n_{i+k}}\|\leq \frac{1}{2}
\end{matrix}\right\}.
\end{align*}
Clearly $\mathcal{M}_m$ is bounded and convex. Let's prove that $[T](\mathcal{M}_m)\subset \mathcal{M}_m$. Fix any $[v_i]\in \mathcal{M}_m$ and take $x\in K$ satisfying  
\[
\limsup_{i\to\infty}\sup_{A\subset\{1,\dots, m\}}\Big\|\displaystyle\frac{1}{m}\sum_{k\in A}( v_{i+k}- x)\Big\|\leq d
\]
and
\[
\frac{1}{m}\limsup_{i\to\infty}\sup_{A\subset\{1,\dots, m\}}\sum_{k\in A}\|v_{i+k} - x_{n_{i+k}}\|\leq \frac{1}{2}.
\]
Note that $Tx\in K$ and $[T]([v_i])\in [K]$. Therefore as $T$ is $\mathfrak{cm}$-nonexpansive and $\| x_{n_i} - T(x_{n_i})\|\to 0$ we have 
\[
\limsup_{i\to\infty}\sup_{A\subset\{1,\dots, m\}}\Big\|\displaystyle\frac{1}{m}\sum_{k\in A}\big(Tv_{i+k}- Tx\big)\Big\|\leq d
\]
and
\[
\quad\frac{1}{m}\limsup_{i\to\infty}\sup_{A\subset\{1,\dots, m\}}\sum_{k\in A}\|Tv_{i+k} - x_{n_{i+k}}\|\leq \frac{1}{2}. 
\]
This shows that $[T]([v_i])\in \mathcal{M}_m$ and hence $[T](\mathcal{M}_m)\subset \mathcal{M}_m$, as desired. 

\vskip .2cm 

\noindent 
By construction one has 
\[
\|R_i\|\leq 1 +\lambda\quad\text{and}\quad\|I- R_i\|\leq \lambda,\quad\forall i\in\mathbb{N}. 
\]
In addition, by Lemma \ref{lem:A}-($\emph{1}$) we have
\[
\limsup_{i\to\infty}\sup_{A\subset\{1,\dots, m\}}\Bigg\|\big(I- R_i\big)\Big( \frac{1}{m}\sum_{k\in A} x_{n_{i+k}}\Big)\Bigg\|=0. 
\]
Now fix any $[v_i]\in \mathcal{M}_m$ and take $x\in K$ so that 
\[
\limsup_{i\to\infty}\sup_{A\subset\{1,\dots, m\}}\Big\|\displaystyle\frac{1}{m}\sum_{k\in A}(v_{i+k}- x)\Big\|\leq d.
\] 
Note that $\| R_i(x)\|\to 0$ and
\[
\frac{1}{m}\limsup_{i\to\infty}\sup_{A\subset\{1,\dots, m\}}\sum_{k\in A}\|v_{i+k} - x_{n_{i+k}}\|\leq \frac{1}{2}.
\] 
Hence
\[
\begin{split}
\frac{1}{m}\sum_{k\in A} v_{i+k} &=  R_i\Big(\frac{1}{m}\sum_{k\in A} v_{i+k} \Big) +\big( I - R_i\big)\Big(\frac{1}{m}\sum_{k\in A} v_{i+k}  \Big)\\
&= R_i\Big(\frac{1}{m}\sum_{k\in A}\big(v_{i+k} -x\big)\Big)+ \frac{|A|}{m}R_i(x) +\big(I- R_i\big)\Big( \frac{1}{m}\sum_{k\in A} x_{n_{i+k}}\Big)\\[1mm]
&\hskip 3cm  + \big(I - R_i\big)\Big(\frac{1}{m}\sum_{k\in A} (v_{i+k}   - x_{n_{i+k}})\Big) ,
\end{split}
\]
from which we get 
\[
\begin{split}
\sup_{A\subset\{1,\dots, m\}}\Big\| \frac{1}{m}\sum_{k\in A} v_{i+k} \Big\| &\leq \sup_{A\subset\{1,\dots, m\}}\Big\| R_i\Big(\frac{1}{m}\sum_{k\in A}\big( v_{i+k} -x\big)\Big)\Big\|  + \|R_i(x)\|\\[1mm]
 &\hskip .5cm +\sup_{A\subset\{1,\dots, m\}}\Bigg\|\big(I- R_i\big)\Big( \frac{1}{m}\sum_{k\in A} x_{n_{i+k}}\Big)\Bigg\|\\[1mm]
 &\hskip .5cm + \sup_{A\subset\{1,\dots, m\}}\Bigg\| \big(I - R_i\big)\Big(\frac{1}{m}\sum_{k\in A} (v_{i+k}   - x_{n_{i+k}})\Big)\Bigg\|.
\end{split}
\]
It follows that
\[
\limsup_{i\to\infty} \sup_{A\subset\{1,\dots, m\}}\Big\| \frac{1}{m}\sum_{k\in A} v_{i+k} \Big\| \leq d(1+\lambda) + \frac{\lambda}{2}. 
\]
Consequently, by (\ref{eqn:6sec3}) we have
\[
\sup_{[v_i]\in\mathcal{M}_m}\limsup_{i\to\infty}\sup_{A\subset\{1,\dots, m\}}\Big\| \frac{1}{m}\sum_{k\in A}v_{i+k}\Big\|\leq d(1 + \lambda)  +  \frac{\lambda}{2}
< 1.
\]
Therefore by Lemma \ref{lem:3sec3} we infer that $\mathcal{M}_m=\emptyset$ for all $m\in\mathbb{N}$.

\vskip .2cm 

\noindent As we will see below, the idea behind defining the set $\mathcal{M}_m$ is aligned with the need to force the generation of $\ell_1$ spreading models. To see this, consider the class
\[
[\vartheta_{i}]=\frac{1}{2}[x_{n_{i}}].
\]
It is easy to see that
\[
\frac{1}{m}\limsup_{i\to\infty}\sup_{A\subset\{1,\dots, m\}}\sum_{k\in A}\| \vartheta_{i+k} - x_{n_{i+k}}\|\leq \frac{1}{2}. 
\]
It follows therefore that
\[
2d<\limsup_{i\to\infty}\sup_{A\subset\{1,\dots, m\}}\Bigg\|\frac{1}{m}\sum_{k\in A} x_{n_{i+k}}\Bigg\|. 
\]
It then follows from this, (\ref{eqn:5sec3}) and (\ref{eqn:6sec3}) that there exists a constant $\delta>0$ such that for all $m\in\mathbb{N}$, one can find $A_m\subset\{1,\dots, m\}$, with $|A_m|\to\infty$ as $m\to\infty$, such that
\[
\delta |A_m|< \Bigg\VERT \sum_{k=1}^{|A_m|} e_{k}\Bigg\VERT.
\]
Since $(x_{n_i})$ is semi-normalized and weakly null $(e_i)$ is a non-trivial spreading model. Consequently the previous inequality combined with \cite[Proposition 6.4]{AKT} shows that $(e_i)$ is equivalent to the unit basis of $\ell_1$. This completes the proof of the theorem. 
\end{proof}

\vskip .2cm

\section{Immediate consequences}\label{sec:4}

In this section we explore some consequences of the main result of this work. We first consider the following localized version of the notion of weak Banach-Saks property.

\begin{dfn} A subset $K$ of a Banach space $X$ is said to have the weak-BSP (or, that is a weak-BS set) if whenever $(x_n)_n\subset K$ weakly converges to $x$ then there exists a subsequence $(y_i)_i$ of $(x_n)$ such that the sequence of averages $((1/n)\sum_{i=1}^n y_i)_n$ is norm-convergent to $x$.
\end{dfn} 

\noindent Cleary if $X$ has weak-BSP then every weakly compact subset of $X$ has weak-BSP. 

\vskip .1cm 
\noindent Let us recall (see e.g. \cite[Definition 1]{L-ART}) that if a sequence $(x_n)_n$ weakly converges to some $x\in X$, then it is said to generate an $\ell_1$-spreading model when there is $\delta>0$ such that
\begin{equation}
\Bigg\|\sum_{n\in S} a_n(x_n - x)\Bigg\|\geq \delta \sum_{n\in S}|a_n|
\end{equation}
for every $S\subseteq \mathbb{N}$ with $\# S\leq \min S$ and every sequence of scalars $(a_n)_{n\in S}$. 

\begin{rem} The proof of \cite[Theorem 2.4]{L-ART} shows that if $K$ is a weakly compact set with weak-BSP then no weakly-convergent sequence in $K$ can generate an $\ell_1$-spreading model. 
\end{rem}

\vskip .1cm 

\noindent Therefore our main result can be rephrased as follows:

\begin{thm}[Main Theorem]\label{thm:1} Let $Z$ be a Banach space having a $\lambda$-EAB $(P_\alpha)_{\alpha\in\mathscr{D}}$ with $\lambda<2$.
Assume that $C$ is a weakly compact convex subset of $Z$ that fails the fixed point property for $\mathfrak{cm}$-nonexpansive maps. Then $C$ fails the weak-BSP. 
\end{thm}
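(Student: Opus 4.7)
The plan is to reduce the statement directly to the Main Theorem by applying it to a suitable subspace containing $C$, and then to combine the resulting $\ell_1$-spreading model with the Remark preceding the statement to contradict weak-BSP.

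First I would set $X=\overline{\mathrm{span}}(C)$, so that $X$ is a closed subspace of $Z$ containing $C$. The hypothesis supplies a $\mathfrak{cm}$-nonexpansive map $T\colon C\to C$ without fixed points, and because $C$ is weakly compact and convex inside $X$ as well, this witnesses that $X$ is a subspace of $Z$ failing the weak-FPP for $\mathfrak{cm}$-nonexpansive maps. Since $Z$ carries a $\lambda$-EAB with $\lambda<2$, the Main Theorem applies and produces a spreading model of $X$ that is isomorphic to $\ell_1$.

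The next step is to observe that the spreading model produced in the proof of the Main Theorem arises from a concrete sequence sitting in $C$ itself. Indeed, that proof constructs a minimal weakly compact convex $T$-invariant subset $K\subseteq C$, extracts an approximate fixed point sequence $(x_n)\subset K$ of $T$, and passes to a basic subsequence $(x_{n_i})$ whose fundamental sequence is equivalent to the unit vector basis of $\ell_1$. The cosmetic normalization $0\in K$ and $\diam K=1$ performed in the proof is just a translation and a rescaling; since the definition of \emph{generating an $\ell_1$-spreading model} is formulated relative to the weak limit of the sequence, $(x_{n_i})$ — viewed in its original, untranslated position inside $K\subseteq C$ — is a weakly convergent sequence in $C$ which generates an $\ell_1$-spreading model in the sense of the definition recalled just before the statement.

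Finally, I would invoke the Remark preceding the theorem, which asserts that any weakly compact set having the weak-BSP contains no weakly-convergent sequence that generates an $\ell_1$-spreading model. Since $C$ does contain such a sequence, it must fail the weak-BSP, completing the proof. The bulk of the work — producing an $\ell_1$-spreading model out of a fixed-point-free $\mathfrak{cm}$-nonexpansive map — has already been absorbed into the Main Theorem, so no genuinely new obstacle arises; the only point requiring mild care is the bookkeeping check that the $\ell_1$-generating sequence truly lives in $C$ rather than in some translate of it, which is immediate from the construction of the approximate fixed point sequence inside the minimal set $K\subseteq C$.
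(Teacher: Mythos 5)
Your proposal is correct and follows essentially the same route as the paper, which presents this theorem precisely as a rephrasing of the Main Theorem combined with the preceding Remark that a weakly compact weak-BS set admits no weakly convergent sequence generating an $\ell_1$-spreading model. Your added bookkeeping — that the $\ell_1$-generating sequence is the approximate fixed point sequence living in the minimal set $K\subseteq C$, and that the translation/rescaling is harmless because the localized definition is taken relative to the weak limit — is exactly the (implicit) content of the paper's reduction.
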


\vskip .1cm 

\noindent The first important consequence is the following fixed point theorem. 

\begin{thm}\label{thm:2} Every weakly compact convex weak-BS subset of a Banach space $X$ has the fixed point property for $\mathfrak{cm}$-nonexpansive maps. 
\end{thm}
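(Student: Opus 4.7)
The plan is to deduce Theorem \ref{thm:2} from Theorem \ref{thm:1} by isometrically embedding the relevant data into a Banach space carrying a monotone Schauder basis, thereby meeting the hypothesis $\lambda<2$ essentially for free.

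Suppose, for contradiction, that the weakly compact convex weak-BS set $C\subseteq X$ admits a fixed-point-free $\mathfrak{cm}$-nonexpansive map $T\colon C\to C$. By Zorn's lemma, extract a minimal weakly compact convex $T$-invariant subset $K\subseteq C$ of positive diameter; as already used in the proof of the Main Theorem, such a $K$ is separable. Let $Y=\overline{\operatorname{span}}(K)$, a separable closed subspace of $X$.

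By the classical Banach--Mazur embedding theorem, there is a linear isometry $J\colon Y\hookrightarrow Z$, where $Z$ denotes the Banach space of continuous functions on $[0,1]$. Since $Z$ admits the Faber--Schauder basis, which is monotone, the corresponding sequence of basis projections provides a $1$-EAB on $Z$, so the hypothesis $\lambda=1<2$ of Theorem \ref{thm:1} is fulfilled. Transfer $T$ via $\widetilde T:=J\circ T\circ J^{-1}\colon J(K)\to J(K)$. Because $J$ is a linear isometry, the definition of $\mathfrak{cm}$-nonexpansiveness (which involves only norms, differences and linear combinations) is preserved, so $\widetilde T$ is a fixed-point-free $\mathfrak{cm}$-nonexpansive self-map of the weakly compact convex subset $J(K)\subseteq Z$. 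Theorem \ref{thm:1} then forces $J(K)$ to fail the weak-BSP in $Z$.

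It remains to pull this failure back through $J^{-1}$ and reach a contradiction with the weak-BSP of $C$. The adjoint $J^{*}\colon Z^{*}\to Y^{*}$ is surjective by Hahn--Banach, so $J$ is a weak-to-weak homeomorphism onto $J(Y)$. Any sequence $(z_n)\subset J(K)$ witnessing the failure of weak-BSP in $Z$ therefore corresponds via $J^{-1}$ to a sequence $(y_n)\subset K$ weakly convergent in $Y$, hence in $X$, to $y:=J^{-1}(\lim z_n)\in K$, and whose Cesàro averages admit no norm-convergent subsequence (norms being preserved by $J$). Since $K\subseteq C$, this contradicts the weak-BSP hypothesis on $C$ and completes the proof. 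The only point needing a touch of care is the direction of weak-convergence transfer from $Z$ back to $Y$, which is handled by the surjectivity of $J^{*}$; I do not expect any genuine obstacle beyond machinery already deployed in Section \ref{sec:3}.
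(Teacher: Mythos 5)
Your proposal is correct and follows essentially the same route as the paper: reduce to the separable span, embed isometrically into $C[0,1]$ via Banach--Mazur, use the monotone Faber--Schauder basis as a $1$-EAB so that $\lambda=1<2$, and invoke Theorem \ref{thm:1}, noting that both $\mathfrak{cm}$-nonexpansiveness and the weak-BSP transfer across the isometry. The paper's version is terser (it applies Theorem \ref{thm:1} directly to $C$ viewed inside $C[0,1]$ rather than passing through a minimal set and arguing by contradiction), but the substance is identical.
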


\begin{proof} Let $C\subset X$ be a weakly compact convex subset with weak-BSP. Since $C$ is separable, so is its closed linear space. Therefore, there is no loss of generality in assuming that $X$ is itself separable. By a result of Banach and Mazur \cite{B} $X$ embeds isometrically into $C[0,1]$ which, as is well known, has a monotone Schauder basis. For $n\in\mathbb{N}$ let $P_n$ denote the corresponding $n$-\textrm{th} basis projection. Clearly $\{P_n\}_{n\in\mathbb{N}}$ is a $1$-EAB for $C[0,1]$. Since weak-BSP is invariant under isometries, $C$ can be seen as a subset of $Z=C[0,1]$ enjoying the weak-BSP. By Theorem \ref{thm:1} the result follows.  
\end{proof}

\vskip .1cm 

\begin{rem} It is worth noting that $L_1[0,1]$ has a monotone basis and (due to Alspach's example \cite{Als}) fails the weak-FPP for isometric maps. In contrast, by a result of Szlenk \cite{Szlenk} $L_1[0,1]$ has the weak-BSP and consequently has the weak-FPP for $\mathfrak{cm}$-nonexpansive maps. As a result $\mathfrak{cm}$-nonexpansiveness is a sharp condition in our result. 
\end{rem}


\noindent An open question in metric fixed point theory is whether super-reflexive Banach spaces have FPP. Our next result settles this problem for the class of $\mathfrak{cm}$-nonexpansive maps. 

\begin{rem} Recall \cite[Definition 3]{James72} that a Banach space $X$ is called {\it super-reflexive} if every Banach space $Y$ that is finitely representable in $X$ is reflexive. Recall also that given Banach spaces $X$ and $Y$, $Y$ is said to be {\it finitely  representable} in $X$ if for any $\varepsilon>0$ and any finite dimensional subspace $N$ of $Y$ there exist a subspace $M$ of $X$ and an isomorphism $\mathfrak{L}\colon N\to M$ such that $(1- \varepsilon)\| y\|\leq \|\mathfrak{L} y\| \leq (1 + \varepsilon) \| y\|$ for all $y\in N$.
\end{rem}

\vskip .1cm 

\begin{thm}\label{thm:5} Let $X$ be a super-reflexive Banach space. Then $X$ has the fixed point property for $\mathfrak{cm}$-nonexpansive maps. 
\end{thm}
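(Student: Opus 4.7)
The plan is to reduce the statement to Theorem \ref{thm:2} via the classical chain of implications \emph{super-reflexive} $\Rightarrow$ \emph{BSP} $\Rightarrow$ \emph{weak-BSP}. The point is that once one has the weak-BSP at the level of the ambient space, every weakly compact convex subset inherits the weak-BSP as a set (as the paper observes immediately after Definition~4.1), and weak-FPP and FPP coincide in the reflexive setting.

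More concretely, first I would fix a bounded closed convex set $C\subset X$ and a $\mathfrak{cm}$-nonexpansive map $T\colon C\to C$. Since $X$ is super-reflexive it is reflexive (finite-dimensional representability of $X$ in itself being trivial), so $C$ is weakly compact. Next, by Kakutani's theorem cited in the introduction, super-reflexive Banach spaces enjoy the BSP, which in particular implies the weak-BSP; hence every weakly null sequence in $X$ admits a Ces\`aro-norm-null subsequence, and a standard translation argument shows that every weakly convergent sequence in $C$ admits a subsequence whose Ces\`aro averages converge in norm to the weak limit. In other words, $C$ is a weak-BS set in the sense of Definition~4.1.

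With $C$ now identified as a weakly compact convex weak-BS subset, Theorem \ref{thm:2} applies and yields a fixed point for $T$ in $C$. Since $C$ was an arbitrary bounded closed convex subset and $T$ an arbitrary $\mathfrak{cm}$-nonexpansive map on it, $X$ has the FPP for $\mathfrak{cm}$-nonexpansive maps.

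There is essentially no obstacle: the whole content of the result is packaged in Theorem \ref{thm:2}, and the only work is to verify that the hypotheses of that theorem are met. The mildest technical point is checking that weak-BSP of the ambient space passes to weak-BSP of the subset $C$ (verifying that the limit of the Ces\`aro averages is the correct weak limit, rather than just norm-convergent), but this is immediate by applying weak-BSP to the translated sequence $x_n-x$ and using the linearity of the Ces\`aro operator.
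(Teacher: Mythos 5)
Your proposal is correct and matches the paper's proof in all essentials: both reduce to Theorem \ref{thm:2} by observing that a super-reflexive space has the weak-BSP (the paper goes via $B$-convexity and Rosenthal's theorem, you go via Kakutani's BSP result, but both implications are recalled in the introduction and are interchangeable here). Your extra remarks on reflexivity making bounded closed convex sets weakly compact and on the subset inheriting the weak-BS property are correct and merely make explicit what the paper leaves implicit.
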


\begin{proof} As we have seen before super-reflexive spaces are $B$-convex, and hence have the weak-BSP. By Theorem \ref{thm:2} the result follows. 
\end{proof}

\begin{cor} Let $X$ be a Banach space isomorphic to a super-reflexive space. Then $X$ has the fixed point property for $\mathfrak{cm}$-nonexpansive maps. 
\end{cor}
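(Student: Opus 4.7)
The plan is to reduce the corollary to Theorem~\ref{thm:2} by observing that the weak Banach-Saks property is an isomorphism invariant of Banach spaces. Let $\mathcal{L}\colon X \to Y$ be an isomorphism onto a super-reflexive Banach space $Y$. Since $Y$ is super-reflexive, it is $B$-convex and therefore, by Rosenthal's theorem, enjoys the weak-BSP. Moreover, since reflexivity is preserved under isomorphism, $X$ is reflexive as well; in particular, every bounded closed convex subset of $X$ is weakly compact, so FPP and weak-FPP coincide in $X$.

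Next, I would transfer weak-BSP from $Y$ to $X$. The isomorphism $\mathcal{L}$ identifies the weak topologies of the two spaces, so a sequence $(x_n) \subset X$ is weakly null if and only if $(\mathcal{L}x_n) \subset Y$ is weakly null. Moreover, since the norms $\|\cdot\|_X$ and $\|\mathcal{L}(\cdot)\|_Y$ are equivalent, the Cesàro averages $(1/n)\sum_{i=1}^n x_i$ converge to $0$ in the original norm of $X$ if and only if their images under $\mathcal{L}$ converge to $0$ in $Y$. Consequently $X$ inherits weak-BSP with respect to its original norm, so every weakly compact convex subset of $X$ is a weak-BS set. Theorem~\ref{thm:2} then furnishes a fixed point for every $\mathfrak{cm}$-nonexpansive self-map of such a set.

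The only conceptual point worth flagging is that $\mathfrak{cm}$-nonexpansiveness is an isometric notion: one cannot simply invoke Enflo's renorming theorem to equip $X$ with an equivalent uniformly convex norm and then cite Theorem~\ref{thm:5}, because such a renorming would alter the class of $\mathfrak{cm}$-nonexpansive maps on $X$. Routing the argument through the isomorphism invariance of weak-BSP, rather than through super-reflexivity of $X$ in some renormed form, avoids this pitfall by keeping the original norm of $X$ in play throughout.
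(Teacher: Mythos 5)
Your argument is correct, but it routes the reduction slightly differently from the paper. The paper's proof is a single line: super-reflexivity is itself an isomorphism invariant (citing James), so $X$ is super-reflexive and Theorem~\ref{thm:5} applies directly. You instead keep the super-reflexivity on the model space $Y$, extract the weak-BSP there via $B$-convexity and Rosenthal's theorem, and transport the weak-BSP back to $X$ through the isomorphism before invoking Theorem~\ref{thm:2}. Both reductions are sound and end at the same place (Theorem~\ref{thm:2} via weak-BSP); the trade-off is that your version only needs the elementary observation that weak nullity and norm convergence of Ces\`aro means are preserved by isomorphisms, whereas the paper leans on the non-trivial fact, due to James, that super-reflexivity survives isomorphism (finite representability with constants $1\pm\varepsilon$ is not obviously isomorphism-stable, so that invariance is a theorem, not a formality). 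Your closing remark is also well taken: since $\mathfrak{cm}$-nonexpansiveness is norm-sensitive, one genuinely cannot renorm $X$ to be uniformly convex and conclude; both your argument and the paper's correctly keep the original norm of $X$ fixed and only move the \emph{property} (weak-BSP, resp.\ super-reflexivity) across the isomorphism.
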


\begin{proof} Super-reflexivity is invariant under isomorphisms (cf. \cite{James72}). 
\end{proof}

\begin{cor} Every Banach space isomorphic to a uniformly convex space has the fixed point property for $\mathfrak{cm}$-nonexpansive maps. 
\end{cor}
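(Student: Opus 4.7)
The plan is to reduce this statement directly to the preceding corollary, which already covers every Banach space isomorphic to a super-reflexive space. The only missing ingredient is the classical fact that every uniformly convex Banach space is super-reflexive; once that is in hand, the hypothesis that $X$ be isomorphic to a uniformly convex space $Y$ automatically places $X$ in the class treated by the preceding corollary, and the conclusion transfers for free.

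To justify the claim that uniform convexity implies super-reflexivity, I would argue directly from the definition recorded in the preceding remark. Suppose $Y$ is uniformly convex with modulus $\delta_Y>0$, and let $W$ be any Banach space finitely representable in $Y$. Given any finite-dimensional subspace $N$ of $W$ and any $\varepsilon>0$, finite representability supplies a subspace $M$ of $Y$ with $\mathrm{d}_{BM}(N,M)\leq 1+\varepsilon$. Since $M$ inherits the modulus $\delta_Y$ from $Y$, pushing this modulus through the $(1+\varepsilon)$-isomorphism shows that $N$ is uniformly convex with a modulus that degrades by at most a factor controlled by $\varepsilon$. Letting $\varepsilon\to 0$ yields a uniform convexity estimate on $W$ itself (the modulus is obtained as a $\liminf$ over the finite-dimensional sections), and the Milman--Pettis theorem then gives reflexivity of $W$. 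Thus every space finitely representable in $Y$ is reflexive, so $Y$ is super-reflexive by definition. Combining this with the preceding corollary produces the desired fixed point property for $X$.

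There is no real obstacle to this argument; once Milman--Pettis is invoked, the rest is bookkeeping. As a cleaner alternative one may simply appeal to Enflo's renorming theorem, which characterizes super-reflexive spaces as exactly those admitting an equivalent uniformly convex norm. Under this characterization the hypothesis ``isomorphic to a uniformly convex space'' and the hypothesis ``isomorphic to a super-reflexive space'' are literally the same, and the corollary becomes a one-line consequence of its predecessor.
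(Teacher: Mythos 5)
Your reduction to the preceding corollary via the classical implication ``uniformly convex $\Rightarrow$ super-reflexive'' is exactly the paper's argument; the only difference is in how that implication is justified --- the paper factors it through uniform non-squareness and cites James, whereas you prove it directly by showing uniform convexity passes to finitely representable spaces and invoking Milman--Pettis (or, alternatively, Enflo's renorming characterization). Both justifications are standard and correct, so the proof is fine.
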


\begin{proof} A uniformly convex space is uniformly non-square and a uniformly non-square space is super-reflexive (cf. \cite{James72}). 
\end{proof}

\noindent The following corollaries follow immediately from above results. 

\begin{cor} Every Banach space isomorphic to $\ell_p$ with $1< p< \infty$ has the fixed point property for $\mathfrak{cm}$-nonexpansive maps. 
\end{cor}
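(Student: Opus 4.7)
The plan is to reduce this corollary to the immediately preceding one. What I need to establish is that each $\ell_p$ with $1<p<\infty$ is (isomorphic to) a uniformly convex space; once this is in hand, the result drops out of the previous corollary without further work.

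First I would recall Clarkson's classical theorem: for $1<p<\infty$ the space $\ell_p$ satisfies Clarkson's inequalities, and from these the modulus of convexity $\delta_{\ell_p}(\varepsilon)>0$ for every $\varepsilon\in(0,2]$ can be read off explicitly (for $p\geq 2$ one gets $\delta(\varepsilon)\geq 1-(1-(\varepsilon/2)^p)^{1/p}$, and a similar bound handles the range $1<p<2$). Hence $\ell_p$ is uniformly convex in its own norm.

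Next, a Banach space $X$ isomorphic to $\ell_p$ is in particular isomorphic to a uniformly convex space, so it falls directly under the scope of the preceding corollary. Invoking that corollary finishes the argument.

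Since the only ingredient beyond what has already been established in the paper is the uniform convexity of $\ell_p$, there is no real obstacle here; the ``hard part,'' if one can call it that, is simply citing Clarkson's inequalities correctly. No new fixed point or spreading model argument needs to be carried out, because the combinatorial and geometric work has already been done in the Main Theorem, in Theorem~\ref{thm:2}, in Theorem~\ref{thm:5}, and in the corollary about isomorphs of uniformly convex spaces.
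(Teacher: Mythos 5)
Your argument is correct and matches the paper's intent: the corollary is stated as following immediately from the preceding results, and the route you take (Clarkson's inequalities give uniform convexity of $\ell_p$ for $1<p<\infty$, hence any isomorph of $\ell_p$ is an isomorph of a uniformly convex space, so the previous corollary applies) is exactly the intended one. No gap here.
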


\begin{cor} Every Banach space isomorphic to a Hilbert space has the fixed point property for $\mathfrak{cm}$-nonexpansive maps. 
\end{cor}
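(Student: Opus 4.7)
The plan is to reduce this to the preceding corollary about spaces isomorphic to a uniformly convex space. Recall that every Hilbert space $H$, via the parallelogram law, satisfies the modulus of convexity estimate $\delta_H(\varepsilon)=1-\sqrt{1-\varepsilon^2/4}>0$ for all $\varepsilon\in(0,2]$, so $H$ is uniformly convex. Consequently, any Banach space $X$ that is linearly isomorphic to a Hilbert space is in particular isomorphic to a uniformly convex Banach space.

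With that observation, the result is immediate: I would simply invoke the corollary already established, namely that every Banach space isomorphic to a uniformly convex space has the fixed point property for $\mathfrak{cm}$-nonexpansive maps. Applied to $X$ (with the uniformly convex space being the target Hilbert space), this yields the desired conclusion.

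There is no real obstacle here; the argument is a one-line appeal to the previous corollary together with the elementary fact that Hilbert spaces are uniformly convex. The substantive work has already been done in Theorem \ref{thm:5} (which in turn rests on the Main Theorem and Theorem \ref{thm:2}), together with the invariance of super-reflexivity under isomorphism used in the uniformly-convex corollary.
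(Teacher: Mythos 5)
Your proposal is correct and matches the paper's intended (and essentially only) argument: the paper states this corollary as an immediate consequence of the preceding one on spaces isomorphic to a uniformly convex space, exactly as you do via the standard fact that Hilbert spaces are uniformly convex.
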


\vskip .5cm

\section{Relation to existing results}\label{sec:5}
The FPP has been studied since the 1965s, especially in connection with geometric properties of Banach spaces. The earlier contributions of Browder \cite{Brow65}, G\"ohde \cite{Goh65} and Kirk \cite{Ki65} were determining and inspiring. Note that FPP and weak-FPP are equivalent in reflexive spaces. The  spaces $\ell_p$, $L_p[0,1]$ with $1< p< \infty$ have FPP. In contrast, $\co$, $\ell_1$ and $L_1[0,1]$ fail it. However, they have weak-BSP. Another important result is that a bounded, closed convex subset $K$ of $\co$ has the FPP if and only if $K$ is weakly compact. One direction is due to Maurey \cite{M}, while the other was proved by Dowling, Lennard and Turett \cite{DLT}. Two further significant results are due to Lin \cite{Lin} and Dom\'inguez Benavides \cite{Benavides1}. Lin proved that $\ell_1$ admits an equivalent norm $\VERT\cdot \VERT$ for which $(\ell_1, \VERT\cdot \VERT)$ has the FPP, giving the first example of a nonreflexive Banach space with this property. Dom\'inguez Benavides proved every Banach space which can be embedded in $\co(\Gamma)$ can be equivalently renormed so as to have FPP. However whether reflexive spaces have FPP remains open. Our main result shows that if one intends to use Maurey's ultrapower techniques to attack this problem, a deeper understanding on the impact of $\ell_1$-spreading models is still needed.

\smallskip 

\subsection{Further applications and questions} In the sequel we show how the main theorem connects to existing works. 

\medskip 

\noindent{\bf ($\mathfrak{a}$)} Let $X$ be a Banach space. For $k\in\mathbb{N}$ denote by $s_k(X)$ the supremum of the set of numbers $\varepsilon\in [0,2]$ for which there exist points $x_1, \dots, x_{k+1}$ in $B_X$ with 
\[
\min\big\{ \| x_i - x_j\|: i\neq j\big\}\geq \varepsilon.
\]
In \cite{GFFN} Garc\'ia Falset, Llorens Fuster and Mazcu\~n\'an Navarro introduced the geometric coefficient $\tilde{\varepsilon}_0^k(X)$ given by
\[
\tilde{\varepsilon}_0^k(X)=\sup\Big\{ \varepsilon\in [0, s_k(X)): \tilde{\delta}^k(\varepsilon)=0\Big\}
\]
where $\tilde{\delta}^k\colon [0, s_k(X))\to [0,1]$  is defined as follows:
\[
\tilde{\delta}^k(\varepsilon)=\inf\Bigg\{ 1 - \Big\|\frac{1}{k+1}\sum_{i=1}^{k+1} x_i\Big\|\colon \{x_i\}_{i=1}^{k+1}\subset B_X,\, \min_{i\neq j}\| x_i - x_j\|\geq \varepsilon\Bigg\}.
\]
They then proved that if $X$ has a strongly bimonotone basis and $\tilde{\varepsilon}_0^k(X)< 2$ for some $k\in\mathbb{N}$, then $X$ has the weak-FPP. A careful analysis of their proof shows that the assumption {\it $\tilde{\varepsilon}^k_0(X)<2$ for some $k\in\mathbb{N}$} actually implies $X$ is $B$-convex. Therefore in view of Theorem \ref{thm:2} we have:

\begin{thm} Let $X$ be a Banach space such that $\tilde{\varepsilon}_0^k(X)< 2$ for some $k\in\mathbb{N}$. Then $X$ has the weak-FPP for $\mathfrak{cm}$-nonexpansive maps. 
\end{thm}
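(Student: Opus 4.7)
The plan is to reduce the hypothesis to $B$-convexity, and then invoke Theorem \ref{thm:2}. Concretely, I will show that the assumption $\tilde{\varepsilon}_0^k(X)<2$ forces $X$ to be $B$-convex, which by Rosenthal's result yields the weak-BSP, and then a direct application of Theorem \ref{thm:2} completes the argument.

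\textbf{Step 1.} Unfolding the definitions of $\tilde{\varepsilon}_0^k(X)$ and $\tilde{\delta}^k$, the assumption $\tilde{\varepsilon}_0^k(X)<2$ means that there exist $\varepsilon_0\in(\tilde{\varepsilon}_0^k(X),\min\{s_k(X),2\})$ and $\eta>0$ such that $\tilde{\delta}^k(\varepsilon_0)\geq \eta$. Equivalently, for every choice of $k+1$ points $x_1,\dots,x_{k+1}\in B_X$ with pairwise distances $\geq \varepsilon_0$ one has
\[
\Bigg\|\frac{1}{k+1}\sum_{i=1}^{k+1} x_i\Bigg\|\leq 1-\eta.
\]

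\textbf{Step 2.} I claim this forces $X$ to be $B$-convex. Suppose not. By the Giesy characterization recalled in the introduction, failure of $B$-convexity is equivalent to $\ell_1$ being finitely representable in $X$. Fix any $\varepsilon\in(0,1)$ with $2(1-\varepsilon)>\varepsilon_0$ and $1-\varepsilon>1-\eta$. Applying finite representability to the $(k+1)$-dimensional subspace $\ell_1^{k+1}\subset \ell_1$, I obtain vectors $y_1,\dots,y_{k+1}\in X$ satisfying
\[
(1-\varepsilon)\Bigg\|\sum_{i=1}^{k+1}a_i e_i\Bigg\|_{\ell_1}\leq \Bigg\|\sum_{i=1}^{k+1}a_i y_i\Bigg\|\leq (1+\varepsilon)\Bigg\|\sum_{i=1}^{k+1}a_i e_i\Bigg\|_{\ell_1}
\]
for all scalars. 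After normalizing, I may assume each $y_i\in B_X$. The left inequality applied with $a_i=\delta_{ij}-\delta_{ik}$ gives $\|y_i-y_j\|\geq 2(1-\varepsilon)>\varepsilon_0$, while the left inequality applied with $a_i\equiv 1/(k+1)$ gives $\|(k+1)^{-1}\sum y_i\|\geq 1-\varepsilon>1-\eta$. This contradicts Step 1, so $X$ is $B$-convex.

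\textbf{Step 3.} By Rosenthal's theorem, $B$-convexity implies the weak-BSP for $X$. In particular every weakly compact convex subset of $X$ is a weak-BS set, so Theorem \ref{thm:2} applies: every such set has the fixed point property for $\mathfrak{cm}$-nonexpansive maps. This is exactly the weak-FPP for $\mathfrak{cm}$-nonexpansive maps, completing the proof.

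The only nontrivial step is Step 2, and its difficulty is conceptual rather than technical: one must recognize that the geometric coefficient $\tilde{\varepsilon}_0^k$ rules out finite representability of $\ell_1^{k+1}$, thereby detecting $B$-convexity. Once this link is made, Rosenthal's result and Theorem \ref{thm:2} do the rest of the work mechanically.
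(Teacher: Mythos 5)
Your proof follows exactly the route the paper takes: $\tilde{\varepsilon}_0^k(X)<2$ forces $B$-convexity, Rosenthal's theorem upgrades this to the weak Banach--Saks property, and Theorem \ref{thm:2} finishes. You in fact supply more detail than the paper, which merely asserts the $B$-convexity implication; your Step 2 is correct up to trivial bookkeeping (normalizing by $1+\varepsilon$ changes the constants to $2(1-\varepsilon)/(1+\varepsilon)$ and $(1-\varepsilon)/(1+\varepsilon)$, and in the degenerate case $\tilde{\varepsilon}_0^k(X)=s_k(X)<2$, where your interval in Step 1 is empty, the same construction contradicts $s_k(X)<2$ directly).
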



\vskip .1cm

\noindent{\bf ($\mathfrak{b}$)} As mentioned before Lin \cite{Lin2} gave the first example of a {\it non}-reflexive Banach space with FPP. Inspired by Lin's approach many authors have provided several other renorming fixed-point techniques (e.g. see \cite{C-SDFJLST}). In \cite{James64} James build an example of a non-reflexive $B$-convex space, solving an open question from \cite{James74}. Theorem \ref{thm:2} shows that the quoted space is an example of a non-reflexive Banach space that has the weak-FPP for $\mathfrak{cm}$-nonexpansive maps without any renorming procedure. 


\vskip .17cm

\noindent{\bf ($\mathfrak{c}$)} $B$-convexity is not equivalent to the weak-FPP. Indeed, by the results of Maurey \cite{M} and Dowling, Lennard and Turett \cite{DLT}, a closed bounded convex subset of $\co$ has the FPP if and only if it is weakly compact. It is also an open question whether $\co$ has the weak-FPP under equivalent renormings. 

\vskip .17cm

\noindent{\bf ($\mathfrak{e}$)} Among the {\it non} $B$-convex reflexive spaces with FPP (see \cite{AM}) are the Ces\`aro sequence spaces $\textit{ces}_p$,$p\in (1,\infty)$, of real sequences $x=(a_k)$ so that 
\[
\| x\|_{c(p)}=\Bigg( \sum_{n=1}^\infty \Big(\frac{1}{n}\sum_{k=1}^n|a_k|\Big)^p\Bigg)^{1/p}<\infty.
\]

\vskip .17cm

\noindent{\bf ($\mathfrak{f}$)} The {\it characteristic of convexity} of a Banach space $X$ is the number 
\[
\varepsilon_0(X)=\sup\{\varepsilon\in [0,2]: \delta_X(\varepsilon)=0\}
\]
where $\delta_X(\varepsilon)$ denotes the usual Clarkson {\it modulus of convexity} of $X$. The space $X$ is said to be {\it uniformly non-square} whenever $\varepsilon_0(X)<2$. In \cite{GF-LF-MN2} Garc\'ia Falset, Llorens Fuster and Mazcu\~n\'an Navarro solved a long-standing problem in the theory by proving that all uniformly non-square Banach spaces have FPP. It is worth pointing out that spaces with such property are super-reflexive.

\vskip .17cm

\noindent{\bf ($\mathfrak{g}$)} Another (apparently still open) problem in the theory asks whether $X\oplus Y$ has the weak-FPP provided that $X$ and $Y$ are two Banach spaces enjoying this property. There are plenty of works addressing this problem (see \cite{W} and references therein). Let $Z$ be a finite dimensional normed space $(\mathbb{R}^n, \| \cdot\|_Z)$. We shall write $(X_1\oplus\dots \oplus X_n)_Z$ for the $Z$-direct sum of the Banach spaces $X_1, \dots, X_n$ equipped with the norm
\[
\| (x_1, \dots, x_n)\|= \| \big( \| x_1\|_{X_1}, \dots, \|x_n\|_{X_n}\big)\|_Z
\]
whenever $x_i\in X_i$ for each $i=1,\dots, n$. In \cite[Lemma 11]{Giesy66}, Giesy proved that $(X_1\oplus\dots \oplus X_n)_{\ell^n_1}$ is $B$-convex if and only if $X_1, \dots, X_n$ are. Thus:

\begin{cor} If $X=(X_1\oplus\dots \oplus X_n)_{\ell^n_1}$ is the $\ell^n_1$-direct sum of $B$-convex Banach spaces then $X$ has the weak-FPP for $\mathfrak{cm}$-nonexpansive maps. 
\end{cor}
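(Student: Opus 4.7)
The plan is to chain together three facts already established (or cited) in the paper: Giesy's stability lemma for $B$-convexity under $\ell_1^n$-direct sums, Rosenthal's implication $B$-convex $\Rightarrow$ weak-BSP, and Theorem \ref{thm:2} (every weakly compact convex weak-BS subset of a Banach space has FPP for $\mathfrak{cm}$-nonexpansive maps). So the corollary should follow immediately, with no new analytic content required.

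First, I would invoke \cite[Lemma 11]{Giesy66} (quoted just above the corollary) to conclude that $X=(X_1\oplus\dots\oplus X_n)_{\ell_1^n}$ is $B$-convex, using the hypothesis that each $X_i$ is $B$-convex. Next, by Rosenthal's theorem \cite{Ros} (stated in the introduction) $B$-convexity implies the weak Banach--Saks property, so $X$ has weak-BSP. In particular every weakly compact convex subset $C\subset X$ is a weak-BS set in the sense of the localized definition in Section \ref{sec:4}.

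Finally, I would apply Theorem \ref{thm:2} to each such $C$: since $C$ is weakly compact, convex, and has the weak-BSP inherited from $X$, every $\mathfrak{cm}$-nonexpansive self-map of $C$ has a fixed point. This is exactly the weak-FPP for $\mathfrak{cm}$-nonexpansive maps on $X$, completing the proof.

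There really is no obstacle here: the corollary is a direct composition of results already assembled in the paper. The only thing worth being careful about is stating the invocations in the correct order (Giesy's lemma $\to$ Rosenthal $\to$ Theorem \ref{thm:2}) and noting explicitly that the weak-BSP of the ambient space transfers to each weakly compact convex subset, which is immediate from the definition since every weakly convergent sequence in such a subset is also a weakly convergent sequence in $X$.
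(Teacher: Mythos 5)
Your proposal is correct and follows exactly the route the paper intends: Giesy's \cite[Lemma 11]{Giesy66} gives $B$-convexity of the $\ell_1^n$-sum, Rosenthal's theorem \cite{Ros} upgrades this to the weak Banach--Saks property, and Theorem \ref{thm:2} then yields the weak-FPP for $\mathfrak{cm}$-nonexpansive maps on every weakly compact convex subset. The paper presents the corollary as an immediate consequence of precisely this chain, so there is nothing to add.
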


\vskip .17cm

\noindent{\bf ($\mathfrak{h}$)} For our next consideration we need a bit of notation. Let $(\Omega, \Sigma, \mu)$ be a complete $\sigma$-finite measure space and let $p\colon \Omega\to [1,+\infty]$ be a measurable function.  Next consider the vector space $\mathcal{X}$ of all measurable functions $g\colon \Omega\to\mathbb{R}$. For $g\in \mathcal{X}$ define the modular 
\[
\rho(g):=\int_{\Omega_f} |g(t)|^{p(t)}d\mu + \mathrm{ess\,sup}_{p^{-1}(\{+\infty\})}|g(t)|,
\]
where $\Omega_f:=\{ t\in \Omega\colon p(t)< \infty\}$. The Variable Lebesgue Space (VLS) (\cite[Definition 2.1]{D-BJ2}) is the space $L^{p(\cdot)}(\Omega)$ endowed with the Luxemburg norm 
\[
\| g\| = \inf\Big\{ \alpha>0 \colon \rho\Big( \frac{ g}{\alpha}\Big)\leq 1\Big\}\,\text{ for } g\in \mathcal{X}_\rho.
\]
In \cite[Theorem 2.5]{D-BJ2} T. Dom\'inguez Benavides and M. Jap\'on established the following relevant characterization of reflexivity for VLSs.

\begin{thm}[Dom\'inguez Benavides-Jap\'on] Let $(\Omega, \Sigma, \mu)$ be an arbitrary $\sigma$-finite measure space and let $p\colon \Omega\to [1,+\infty]$ be a measurable function. The following conditions are equivalent:
\begin{itemize}
\item[\emph{i)}] $L^{p(\cdot)}(\Omega)$ is reflexive.
\item[\emph{ii})] $L^{p(\cdot)}(\Omega)$ contains no isomorphic copy of $\ell_1$. 
\item[\emph{iii)}] Let $\Omega^*:=\Omega\setminus p^{-1}(\{1, +\infty\})$. Then $1< p_{-}(\Omega^*)\leq p_{+}(\Omega^*)< +\infty$ and $p^{-1}(\{1, + \infty\})$ is essentially formed by finitely many atoms at most. 
\end{itemize}
\end{thm}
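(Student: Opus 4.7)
My plan is to establish the circle $(\text{i}) \Rightarrow (\text{ii}) \Rightarrow (\text{iii}) \Rightarrow (\text{i})$. The first implication is immediate from James' theorem: reflexivity passes to closed subspaces and $\ell_1$ is not reflexive, so no reflexive space can contain an isomorphic copy of it.

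For $(\text{iii}) \Rightarrow (\text{i})$, I would decompose $L^{p(\cdot)}(\Omega) = L^{p(\cdot)}(\Omega^*) \oplus L^{p(\cdot)}(\Omega \setminus \Omega^*)$. Under $(\text{iii})$ the second summand is finite-dimensional (essentially finitely many atoms), hence trivially reflexive. For the first summand, the two-sided bounds $1 < p_-(\Omega^*) \leq p_+(\Omega^*) < \infty$ yield the standard modular--norm comparison of the form $\min(\|g\|^{p_-}, \|g\|^{p_+}) \leq \rho(g) \leq \max(\|g\|^{p_-}, \|g\|^{p_+})$. Combined with the Clarkson-type inequality for variable exponents (as in Diening--Harjulehto--H\"ast\"o--R\r{u}\v{z}i\v{c}ka), this gives uniform convexity, and uniformly convex spaces are reflexive by the Milman--Pettis theorem.

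The core content is the contrapositive $\neg(\text{iii}) \Rightarrow \neg(\text{ii})$: whenever $(\text{iii})$ fails, produce an isomorphic copy of $\ell_1$. I would enumerate the failure modes. If the essential part of $p^{-1}(\{1\})$ is non-atomic or contains infinitely many atoms, then disjointly supported normalized functions $f_n$ satisfy $\bigl\| \sum a_n f_n \bigr\| = \sum |a_n|$ by direct modular computation, producing an isometric copy of $\ell_1$. If $p^{-1}(\{+\infty\})$ has non-trivial essential mass beyond finitely many atoms, the induced subspace contains $\ell_\infty$, and $\ell_\infty$ contains $\ell_1$. If $p_+(\Omega^*) = +\infty$ on a non-atomic piece, one selects $E_n \subset \Omega^*$ with $\mathrm{ess\,inf}_{E_n} p \to \infty$ and constructs disjointly supported vectors whose modular asymptotics reproduce $\ell_\infty$-behavior, again embedding $\ell_1$.

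The hardest step is the remaining case $p_-(\Omega^*) = 1$ on a non-atomic piece. The obstruction is that on each chosen $E_n$ the exponent is strictly greater than $1$, so the naive modular computation yields $\ell_{p_n}$-type behavior for some $p_n > 1$, not $\ell_1$-type. The challenge is to choose $E_n$ with $\mathrm{ess\,sup}_{E_n} p \to 1^+$ and normalize $f_n$ supported on $E_n$ so that the closed linear span is genuinely equivalent to $\ell_1$ rather than to a degenerating sum of $\ell_{p_n}$'s. The standard route is to normalize so that $\rho(f_n) = \varepsilon_n$ with $\varepsilon_n \downarrow 0$ at a rate carefully balanced against $p_n - 1$, and then exploit the convexity of the modular together with the uniform convergence $t^{p_n} \to t$ on $[0,1]$ as $p_n \to 1^+$ to secure a two-sided comparison $c \sum |a_n| \leq \bigl\| \sum a_n f_n \bigr\| \leq \sum |a_n|$ on the span. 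Balancing these two rates is the delicate bookkeeping that carries the whole argument, and I expect it to be the main obstacle.
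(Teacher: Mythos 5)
First, a point of comparison: the paper does not prove this statement at all. It is imported verbatim from Dom\'inguez Benavides and Jap\'on (\cite{D-BJ2}, their Theorem 2.5) and used as a black box to deduce the corollary that follows it, so there is no in-paper argument to measure yours against. Judged on its own terms, your architecture is the standard one and most of it is sound: (i)$\Rightarrow$(ii) is immediate; the splitting $L^{p(\cdot)}(\Omega)=L^{p(\cdot)}(\Omega^*)\oplus L^{p(\cdot)}(\Omega\setminus\Omega^*)$ with a finite-dimensional second summand and a uniformly convex (hence reflexive) first summand when $1<p_-\le p_+<\infty$ gives (iii)$\Rightarrow$(i), consistent with the paper's own remark that \cite{D-BJ2} in fact exhibit an isomorphism onto a uniformly convex space; and the first three failure modes of (iii) (an infinite-dimensional $p^{-1}(\{1\})$ part, an infinite-dimensional $p^{-1}(\{+\infty\})$ part, and $p_+(\Omega^*)=+\infty$) are correctly reduced to copies of $L^1$, of $L^\infty\supset\ell_\infty\supset\ell_1$, and of $\ell_\infty$ via disjoint blocks with $\mathrm{ess\,inf}_{E_n}p\ge n$.

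The genuine gap sits exactly where you place it, and your proposed normalization would make the construction collapse. If $\rho(f_n)=\varepsilon_n\downarrow 0$ with $p\le 1+\delta_n$ on $\mathrm{supp}\,f_n$, then $\rho(f_n/\alpha)\le \alpha^{-(1+\delta_n)}\varepsilon_n$ for $\alpha\le 1$, so $\|f_n\|\le\varepsilon_n^{1/(1+\delta_n)}\to 0$: these vectors cannot span an isomorphic copy of $\ell_1$. The correct normalization is $\rho(f_n)=1$ (equivalently $\|f_n\|=1$), with the smallness pushed into the exponent oscillation: choose disjoint $E_n\subset\Omega^*$ with $1<p\le 1+\delta_n$ on $E_n$ and $\sum_n\delta_n<1$, say. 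Then for $a_n\ge 0$ and $\alpha=\sum_m a_m$ one has $a_n/\alpha\le 1$, hence $\rho\bigl(\sum_n a_nf_n/\alpha\bigr)\ge\sum_n(a_n/\alpha)^{1+\delta_n}\ge 1-\tfrac1e\sum_n\delta_n$ by the elementary bound $t-t^{1+\delta}=t(1-t^\delta)\le\delta/e$ on $[0,1]$; and since $(x/c)^{p(t)}\ge x^{p(t)}/c$ for $0<c\le 1$, putting $c=1-\tfrac1e\sum_n\delta_n$ yields $\rho\bigl(\sum_n a_nf_n/(c\alpha)\bigr)\ge 1$ and therefore $\bigl\|\sum_n a_nf_n\bigr\|\ge c\sum_n a_n$ (in fact an asymptotically isometric copy of $\ell_1$, which is what \cite{D-BJ2} extract and what drives their fixed-point results). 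The quantitative requirement is summability of the $\delta_n$, not merely $\delta_n\to 0$: with a fixed $\delta>0$ and $t_n\equiv 1/N$ the quantity $\sum_nt_n^{1+\delta}=N^{-\delta}$ tends to $0$, so without this control the lower $\ell_1$-estimate genuinely fails. This balancing is the actual content of the case, and your sketch does not yet supply it.
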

\noindent Let us remark that their proof that {\it iii)} implies {\it i)} shows a bit more, they in fact proved that $L^{p(\cdot)}(\Omega)$ is isomorphic to a uniformly convex space. Thus, as $B$-convexity is invariant under isomorphisms (\cite[Corollary 6]{Giesy66}), we may deduce that $L^{p(\cdot)}(\Omega)$ is reflexive if and only if it is $B$-convex. As a scholium of this fact by Theorem \ref{thm:2} we have:

\begin{cor} Let $(\Omega, \Sigma, \mu)$ be an arbitrary $\sigma$-finite measure space and let $p\colon \Omega\to [1,+\infty]$ be a measurable function. If $L^{p(\cdot)}(\Omega)$ is reflexive then it has the fixed point property for $\mathfrak{cm}$-nonexpansive maps. 
\end{cor}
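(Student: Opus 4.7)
The plan is to reduce the statement to the main fixed point theorem (Theorem \ref{thm:5} or its underlying Theorem \ref{thm:2}) via a chain of implications that pass through uniform convexity up to isomorphism. Concretely, I would argue: reflexivity of $L^{p(\cdot)}(\Omega)$ $\Rightarrow$ isomorphism to a uniformly convex space $\Rightarrow$ $B$-convexity $\Rightarrow$ weak-BSP $\Rightarrow$ weak-FPP for $\mathfrak{cm}$-nonexpansive maps $\Rightarrow$ FPP for $\mathfrak{cm}$-nonexpansive maps, the last equivalence coming from reflexivity.

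First, I would invoke the Dom\'inguez Benavides--Jap\'on theorem stated just above the corollary. What is needed is not merely the equivalence of \emph{i)} and \emph{iii)}, but the sharper content highlighted in the remark preceding the corollary: the proof of \emph{iii)} $\Rightarrow$ \emph{i)} actually constructs an isomorphism from $L^{p(\cdot)}(\Omega)$ onto a uniformly convex Banach space. So, assuming $L^{p(\cdot)}(\Omega)$ is reflexive, one obtains a uniformly convex space $Y$ and a linear isomorphism $\mathcal{L}\colon L^{p(\cdot)}(\Omega)\to Y$.

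Next, I would bring in Giesy's invariance result (\cite[Corollary 6]{Giesy66}) stating that $B$-convexity is preserved under linear isomorphisms. Since uniformly convex spaces are $B$-convex (as noted earlier in the paper via Beck's theorem), $Y$ is $B$-convex, and hence so is $L^{p(\cdot)}(\Omega)$. By Rosenthal's theorem, $B$-convexity implies weak-BSP, so $L^{p(\cdot)}(\Omega)$ has the weak Banach--Saks property; in particular every weakly compact convex subset $C\subset L^{p(\cdot)}(\Omega)$ is a weak-BS set in the sense of the definition of Section \ref{sec:4}.

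Finally, I would apply Theorem \ref{thm:2} to conclude that every weakly compact convex subset of $L^{p(\cdot)}(\Omega)$ has the fixed point property for $\mathfrak{cm}$-nonexpansive maps, i.e.\ the space has the weak-FPP for such maps. Because reflexivity forces every bounded closed convex set to be weakly compact, weak-FPP coincides with FPP here, yielding the statement. The main (minor) obstacle is just ensuring the cited consequence of the Dom\'inguez Benavides--Jap\'on argument---namely the isomorphism with a uniformly convex space---is invoked in the right form so that Giesy's isomorphic invariance of $B$-convexity can kick in; no new fixed point analysis is needed beyond what Theorem \ref{thm:2} already provides.
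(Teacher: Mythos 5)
Your proposal is correct and follows essentially the same route as the paper: invoke the stronger content of the Dom\'inguez Benavides--Jap\'on argument (isomorphism with a uniformly convex space), pass to $B$-convexity via Giesy's isomorphic invariance, then to weak-BSP, and conclude with Theorem \ref{thm:2} together with the equivalence of FPP and weak-FPP in reflexive spaces. No substantive difference from the paper's argument.
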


\noindent We note that this result yields a slight generalization of \cite[Theorem 4.2]{D-BJ2}, as no further condition is needed on the function $p(\cdot)$. As highlighted in \cite[Theorem 3.3]{D-BJ2}, see also comments in p. 12, one can find plenty of examples of nonreflexive VLSs that still have the weak-FPP. For instance, $L^{1+x}([0,1])$ is one of them. All these remarks naturally lead us to ask whether there is a reflexive {\it non} $B$-convex subspace of $L^{p(\cdot)}(\Omega)$. 

\vskip .17cm

\noindent{\bf ($\mathfrak{i}$)} It is known that $C[0,1]$ fails the weak-BSP. Moreover as $C[0,1]$ is a universal separable Banach space, it fails the weak-FPP for nonexpansive maps. Notice however that $C[0,1]$ contains a weakly compact convex set with weak-BSP (cf. \cite{FT, L-ART}). We do not know whether there is a weakly compact convex set in $C[0,1]$ which fails FPP for $\mathfrak{cm}$-nonexpansive maps.

\vskip .5cm

\section{Appendix}\label{sec:apdx}

\smallskip 

\begin{prop} The function $\varphi \colon [0,1]\to [0,1]$ defined by
\[
\varphi(t)= \max(t,1/2)
\]
is $\mathfrak{cm}$-nonexpansive. 
\end{prop}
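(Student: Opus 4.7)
The plan is to prove the following pointwise strengthening of $\mathfrak{cm}$-nonexpansiveness, which implies the proposition at once: for every $n\in\mathbb{N}$ and every $y,t_{1},\dots,t_{n}\in[0,1]$,
\[
\sup_{A\subset\{1,\dots,n\}}\Big|\sum_{k\in A}(\varphi(t_{k})-\varphi(y))\Big|\;\leq\;\sup_{A\subset\{1,\dots,n\}}\Big|\sum_{k\in A}(t_{k}-y)\Big|.
\]
Once this is in hand, taking $t_{k}=u_{i+k}$ for an arbitrary sequence $(u_{i})\subset[0,1]$ and any $y\in[0,1]$, then passing to $\limsup_{i\to\infty}$ on both sides, produces exactly the defining inequality of $\mathfrak{cm}$-nonexpansiveness.

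The first step will be a reduction from the sup over subsets to positive and negative parts. For real scalars $c_{1},\dots,c_{n}$, the optimal $A$ for $\sum_{k\in A}c_{k}$ is $\{k\colon c_{k}>0\}$, and for $-\sum_{k\in A}c_{k}$ is $\{k\colon c_{k}<0\}$, so
\[
\sup_{A\subset\{1,\dots,n\}}\Big|\sum_{k\in A}c_{k}\Big|=\max\Big(\sum_{k}c_{k}^{+},\;\sum_{k}c_{k}^{-}\Big).
\]
It therefore suffices to establish the termwise inequalities $(\varphi(t)-\varphi(y))^{+}\leq(t-y)^{+}$ and $(\varphi(t)-\varphi(y))^{-}\leq(t-y)^{-}$ for all $y,t\in[0,1]$, and then sum over $k$.

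The second step is a short case analysis verifying these two pointwise inequalities. If $y\geq 1/2$ then $\varphi(y)=y$: either $t\geq 1/2$, in which case $\varphi(t)-\varphi(y)=t-y$ exactly, or $t<1/2$, in which case both differences are nonpositive and $|\varphi(t)-\varphi(y)|=y-1/2\leq y-t=|t-y|$. If $y<1/2$ then $\varphi(y)=1/2$: either $t\geq 1/2$, with both differences nonnegative and $\varphi(t)-\varphi(y)=t-1/2\leq t-y$, or $t<1/2$, in which case $\varphi(t)-\varphi(y)=0$, which is dominated by both $(t-y)^{+}$ and $(t-y)^{-}$. Each case delivers the required termwise comparison.

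There is no genuine obstacle here; the argument is an elementary bookkeeping of signs. The only conceptual point is the reduction to $\max(\sum c_{k}^{+},\sum c_{k}^{-})$, which is what rescues the argument from the fact that $\varphi$ can transform a perfectly cancelling collection (for instance $\{1/4,3/4\}$ around $y=1/2$) into a non-cancelling one: the supremum over $A$ on the right-hand side already records the full one-sided mass of $(t_{k}-y)$ and so is never destroyed by such cancellations, which is what makes the bound loose enough to absorb the non-linear behaviour of $\varphi$ at the threshold $1/2$.
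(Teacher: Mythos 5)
Your proof is correct. The identity $\sup_{A\subset\{1,\dots,n\}}\bigl|\sum_{k\in A}c_k\bigr|=\max\bigl(\sum_k c_k^{+},\sum_k c_k^{-}\bigr)$ is right (the two extremal subsets are the set of positive indices and the set of negative indices, with $A=\emptyset$ covering the degenerate cases), your four-case verification of the termwise bounds $(\varphi(t)-\varphi(y))^{\pm}\le (t-y)^{\pm}$ checks out, and summing over $k$ and then taking $\limsup_i$ yields exactly the defining inequality. This is a genuinely different organization from the paper's argument: the paper fixes a subset $A$, assumes without loss of generality that the absolute value opens with a plus sign, splits $A$ according to whether $t_{i+k}\ge 1/2$ or not, discards the nonpositive contribution of the block with $t_{i+k}<1/2$, bounds the remaining block by $\sum_k (t_{i+k}-s)$ over that sub-subset, and finally dominates this by the supremum over all subsets $B$. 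Your route replaces that per-subset estimate by an exact evaluation of the supremum followed by a purely scalar comparison. What this buys: it sidesteps the paper's slightly delicate ``without loss of generality'' step (the two signs are not symmetric in $t$ and $s$, so strictly speaking the opposite sign needs its own case analysis), and it makes transparent that the only properties of $\varphi$ being used are monotonicity and $1$-Lipschitzness, so the same argument shows that every nondecreasing nonexpansive self-map of $[0,1]$ is $\mathfrak{cm}$-nonexpansive. The paper's version is more hands-on in exhibiting the witnessing sub-subset, but gains no extra generality.
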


\begin{proof} Fix $n\in\mathbb{N}$ and take arbitrary numbers $t_i, s\in [0,1]$, $(i\in\mathbb{N})$. Then
\[
\Bigg| \sum_{k\in A}\big(\varphi(t_{i+k}) - \varphi(s)\big)\Bigg|= \Bigg|\sum_{k\in A}\big( \max(t_{i+k}, 1/2) - \max(s,1/2)\big)\Bigg|:=\Delta_i(A).
\]
Assume without loss of generality that
\[
 \Delta_i(A)= \sum_{k\in A}\big(\max(t_{i+k}, 1/2) - \max(s,1/2)\big).
\]
Note that
\[
\begin{split}
 \Delta_i(A)=& \Bigg(\sum_{k\in \{ t_{i+k}\geq 1/2\}\cap A}\big( t_{i+k} - \max(s,1/2)\big) + \\[1mm]
 &\hskip 1.5cm +\sum_{k\in \{ t_{i+k}< 1/2\}\cap A}\big( 1/2 - \max(s,1/2)\big)\Bigg) \\[1mm]
 &\leq \sum_{k\in \{ t_{i+k}\geq 1/2\}\cap A}( t_{i+k} -s)\\[1mm]
 &= \Bigg| \sum_{k\in \{ t_{i+k}\geq 1/2\}\cap A}( t_{i+k} -s)\Bigg|\leq \sup_{B\subset\{ 1,\dots, n\}}\Bigg|\sum_{k\in B}(t_{i+k} - s)\Bigg|.
\end{split}
\]
Therefore,
\[
\sup_{A\subset\{1,\dots, n\}}\Bigg| \sum_{k\in A}\big(\varphi(t_{i+k}) - \varphi(s)\big)\Bigg|\leq \sup_{B\subset\{ 1,\dots, n\}}\Bigg|\sum_{k\in B}(t_{i+k} - s)\Bigg|
\]
and hence
\[
\limsup_{i\to\infty}\sup_{A\subset\{1,\dots, n\}}\Bigg| \sum_{k\in A}\big(\varphi(t_{i+k}) - \varphi(s)\big)\Bigg|\leq \limsup_{i\in\infty} \sup_{A\subset\{ 1,\dots, n\}}\Bigg|\sum_{k\in A}(t_{i+k} - s)\Bigg|.
\]
\end{proof}


\vskip .2cm 


\begin{prop} Let $K=[\mathbf{0},\mathbf{1}]$ denote the order interval in $L_1[0,1]$, and denote by $\|\cdot\|_1$ the standard $L_1$-norm. The mapping $T\colon K\to K$ defined \textrm{a.e.} in $[0,1]$ by
\[ 
Tu(t) = \frac{1}{2}u(t)\cdot\int_0^t u(s)ds 
\]
is $\mathfrak{cm}$-nonexpansive.
\end{prop}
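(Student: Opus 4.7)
The plan is to exploit the pivotal identity $Tu(t) = \tfrac{1}{4}\bigl(F_u(t)^2\bigr)'$, where $F_u(t) := \int_0^t u(s)\,ds$, which is immediate from the product rule. The upshot is that every sum $\sum_{k\in A}(Tu_{i+k} - Ty)$ is the distributional derivative of an explicit function, so its $L_1[0,1]$-norm becomes a total variation:
\[
\Bigl\|\sum_{k\in A}\bigl(Tu_{i+k}-Ty\bigr)\Bigr\|_1 \;=\; \tfrac{1}{4}\,\mathrm{TV}_{[0,1]}\bigl(\Psi_{A,i}\bigr),\qquad \Psi_{A,i} \;=\; \sum_{k\in A}F_{u_{i+k}}^2 - |A|F_y^2.
\]

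Substituting $F_{u_{i+k}} = F_y + F_{V_k}$, with $V_k := u_{i+k}-y$ and $W_A := \sum_{k\in A}V_k$, decomposes $\Psi_{A,i} = 2F_y F_{W_A} + \sum_{k\in A} F_{V_k}^2$. The \emph{linear} part is routine: $(F_y F_{W_A})' = y F_{W_A} + F_y W_A$ together with $|y|,|F_y|\le 1$ give $\mathrm{TV}(2F_y F_{W_A}) \le 4\|W_A\|_1$, contributing at most $\|W_A\|_1 \le \sup_{A'}\bigl\|\sum_{k\in A'}(u_{i+k}-y)\bigr\|_1$ to the overall bound. The \emph{quadratic} part, $\mathrm{TV}\bigl(\sum_{k\in A}F_{V_k}^2\bigr) = 2\int_0^1\bigl|\sum_{k\in A}V_k F_{V_k}\bigr|\,dt$, is where the work lies: a naive triangle-inequality bound yields only $\sum_{k\in A}\|V_k\|_1^2$, which is not sharp enough.

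My strategy for the quadratic part is to exploit the $\limsup$ in the definition of $\mathfrak{cm}$-nonexpansiveness. By weak compactness of $K = [\mathbf{0},\mathbf{1}]$ in $L_1[0,1]$ (Dunford--Pettis, since $K$ is order-bounded hence uniformly integrable) and a diagonal extraction over $k\in\{1,\dots,n\}$, I pass to a subsequence $(i_j)$ with $u_{i_j+k} \to u_k^\infty\in K$ weakly for each $k$. Arzel\`a--Ascoli applied to the equi-Lipschitz family $\{F_{u_{i_j+k}}\}$ yields $F_{V_k^{(i_j)}} \to F_{V_k^\infty}$ uniformly on $[0,1]$; hence
\[
\bigl\|V_k^{(i_j)}\bigl(F_{V_k^{(i_j)}}-F_{V_k^\infty}\bigr)\bigr\|_1 \;\le\; \|V_k^{(i_j)}\|_1\,\bigl\|F_{V_k^{(i_j)}}-F_{V_k^\infty}\bigr\|_\infty \;\longrightarrow\; 0,
\]
which asymptotically reduces the quadratic sum to the bilinear expression $\sum_{k\in A}V_k^{(i_j)}F_{V_k^\infty}$ with \emph{fixed} $L_\infty$-kernels $F_{V_k^\infty}$.

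The hardest step is to absorb this limiting expression into $\limsup_i\sup_{A'}\|W_{A'}^{(i)}\|_1$. The illuminating model case is $V_1 = V$, $V_2 = -V$: although $W_{\{1,2\}} = 0$, one computes directly that $T(y+V)+T(y-V)-2Ty = \tfrac{1}{2}V F_V$, whose $L_1$-norm satisfies $\|\tfrac{1}{2}VF_V\|_1 \le \tfrac{1}{2}\|V\|_1^2 \le \tfrac{1}{2}\|V\|_1 = \tfrac{1}{2}\|W_{\{1\}}\|_1$ (using $\|V\|_1 \le 1$). This shows that cancellations in $W_A$ are always paired with quadratic remainders dominated by single-index ``witnesses'' on the right-hand side. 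Making this precise by selecting an appropriate $A' \subseteq A$ driven by the sign structure of the limiting kernels $F_{V_k^\infty}$, together with the $\limsup$ extraction above and the fact that $\limsup_i$ commutes with $\sup_A$ since $A$ ranges over the finite collection $2^{\{1,\dots,n\}}$, should close the $\mathfrak{cm}$-nonexpansive inequality.
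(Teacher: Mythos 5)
Your reduction of $\bigl\|\sum_{k\in A}(Tu_{i+k}-Ty)\bigr\|_1$ to $\tfrac14\mathrm{TV}(\Psi_{A,i})$ and the decomposition $\Psi_{A,i}=2F_yF_{W_A}+\sum_{k\in A}F_{V_k}^2$ are correct, as is your estimate for the linear part. But the argument stops exactly where the content of the proposition lies: the absorption of the quadratic term $\tfrac12\int_0^1\bigl|\sum_{k\in A}V_kF_{V_k}\bigr|\,dt$ into $\sup_{A'}\bigl\|\sum_{k\in A'}(u_{i+k}-y)\bigr\|_1$ is announced (``should close the inequality'') but never carried out, and no selection rule for $A'$ is given beyond the single model case $V_2=-V_1$, where the linear part happens to vanish. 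In general the two parts compete for the same budget: your own bound for the linear part is already $\|W_A\|_1$, which can be comparable to $\sup_{A'}\|W_{A'}\|_1$, so after the triangle inequality any strictly positive quadratic remainder must be paid for by slack in the linear estimate; the two terms have to be controlled jointly, and nothing in the proposal does this. Note also that the weak-compactness and Arzel\`a--Ascoli reduction, while correct, buys nothing: the difficulty is fully present for the constant sequence $u_{i+k}=u_k$, where the $\limsup$ over $i$ is vacuous, so what must be proved is the deterministic finite inequality $\bigl\|\sum_{k\in A}(Tu_k-Ty)\bigr\|_1\le\sup_{A'}\bigl\|\sum_{k\in A'}(u_k-y)\bigr\|_1$, and the extraction machinery leaves that untouched.

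The paper proves precisely this deterministic inequality, for each fixed $i$ and $A$, by an asymmetric factorization that avoids the pure quadratic term altogether: $2(Tu-Ty)=u\,F_{u-y}+(u-y)F_y$, so that $2\sum_{k\in A}(Tu_{i+k}-Ty)=\sum_{k\in A}u_{i+k}F_{V_k}+W_A F_y$. The second summand is handled by $0\le F_y\le 1$; for the first, the coefficients $u_{i+k}(t)$ are nonnegative and bounded by $1$, so at each $t$ one partitions $A$ into $B(t)=\{k\in A: F_{V_k}(t)\ge 0\}$ and its complement and bounds the positive part by $F_{W_{B(t)}}(t)\le\sup_{C}\|W_C\|_1$, and symmetrically for the negative part. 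Your symmetric expansion $F_{u_k}^2-F_y^2=2F_yF_{V_k}+F_{V_k}^2$ destroys exactly this sign structure: the coefficient multiplying $F_{V_k}$ becomes the signed function $V_k$ instead of the nonnegative $u_{i+k}$, which is why the quadratic term appears and why the missing absorption step is the genuinely hard part rather than a routine completion. I would recommend reorganizing the identity as above and running the sign-partition argument pointwise in $t$.
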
 
\begin{proof} Indeed, let $n\in\mathbb{N}$ be fixed and let $v\in K$ and $(u_i)_{i=1}^\infty$ be arbitrary functions in $K$. Fix any set $A\subset \{1, \dots, n\}$. Then 
\[
\begin{split}
2\Big\| \sum_{k\in A}( Tu_{i+k} - Tv)\Big\|_1 &= 2\int_0^1 \Big|  \sum_{k\in A}( Tu_{i+k} - Tv)(t)\Big| dt\\[1mm]
&\leq \int_0^1\Big|\sum_{k\in A} u_{i+k}(t)\cdot \int_0^t (u_{i+k}(s) - v(s))ds \Big|dt  \\[1mm]
&\hskip 1.7cm + \int_0^1\Big|\sum_{k\in A}\big( u_{i+k}(t)- v(t)\big)\cdot \int_0^t v(s)ds\Big| dt.
\end{split}
\]
\paragraph{\bf Claim 1} 
\[
\int_0^1\Big|\sum_{k\in A}\big( u_{i+k}(t)- v(t)\big)\cdot \int_0^t v(s)ds\Big| dt\leq 
\sup_{A\subset\{1,\dots, n\}}\Big\|\sum_{k\in A}\big(u_{i+k} - v\big)\Big\|_1.
\]
This follows directly from the fact that $0\leq v\leq 1$ on $[0,1]$. 


\vskip .3cm 


\paragraph{\bf Claim 2}
\[
\int_0^1\Big|\sum_{k\in A} u_{i+k}(t)\cdot \int_0^t (u_{i+k}(s) - v(s))ds \Big|dt\leq \sup_{C\subset\{1,\dots, n\}}\Big\|\sum_{k\in C}(u_{i+k} - v)\Big\|_1.
\]
To see this, for $t\in [0,1]$ set
\[
\Delta^1_t:= \sum_{k\in A} u_{i+k}(t)\cdot \int_0^t (u_{i+k}(s) - v(s))ds\quad\text{and}\quad \Delta^2_t: =-\Delta^1_t.
\]
Then
\[
\int_0^1\Big|\sum_{k\in A} u_{i+k}(t)\cdot \int_0^t (u_{i+k}(s) - v(s))ds \Big|dt=\int_{\{t\colon \Delta^1_t\geq 0\}} \Delta^1_t dt   + \int_{\{t\colon \Delta^1_t<0\}} \Delta^2_t dt.
\]
Note that
\[
\Delta^1_t = \sum_{k\in B} u_{i+k}(t)\int_0^t ( u_{i+k}(s)- v(s))ds + \sum_{k\in A\setminus B}u_{i+k}(t)\int_0^t ( u_{i+k}(s)- v(s))ds,
\]
where $B=\{ k\in A\colon \int_0^t (u_{i+k}(s) - v(s))\geq 0\}$. Similarly we have
\[
\Delta^2_t = \sum_{k\in B} u_{i+k}(t)\int_0^t ( v(s) - u_{i+k}(s))ds + \sum_{k\in A\setminus B}u_{i+k}(t)\int_0^t ( v(s)- u_{i+k}(s))ds.
\]
It follows that
\[
\Delta^1_t \leq \max_{1\leq k\leq n} u_{i+k}(t)  \sum_{k\in B} \int_0^t(u_{i+k}(s) - v(s))ds 
\]
and
\[
\Delta^2_t \leq  \max_{1\leq k\leq n} u_{i+k}(t) \sum_{k\in A\setminus B} \int_0^t (v(s) - u_{i+k}(s))ds.
\]
These estimates imply
\[
\Delta^1_t \leq \max_{1\leq k\leq n} u_{i+k}(t)\int_0^t\Big| \sum_{k\in B} (u_{i+k}(s) - v(s))\Big|ds\leq \sup_{C\subset \{1,\dots, n\}}\Big\| \sum_{k\in C}(u_{i+k} - v)\Big\|_1
\]
and (similarly)
\[
\Delta^2_t\leq \max_{1\leq k\leq n} u_{i+k}(t)\int_0^t\Big|\sum_{k\in A\setminus B}^n(v(s) - u_{i+k}(s))\Big|ds\leq \sup_{C\subset\{1,\dots, n\}}\Big\|\sum_{k\in C}(u_{i+k}-v)\Big\|_1.
\]
Consequently
\[
\begin{split}
\int_0^1\Big|\int_0^t \sum_{i=1}^n v_i(t)\cdot (u_i - v_i)ds\Big|dt&=\int_{\{t\colon \Delta^1_t\geq 0\}} \Delta^1_t dt   + \int_{\{t\colon \Delta^1_t<0\}} \Delta^2_t dt\\[1mm]
&\leq \sup_{C\subset\{1,\dots, n\}}\Big\| \sum_{k\in C}(u_{i+k} - v)\Big\|_1 
\end{split}
\]
which proves the Claim 2. 

\vskip .2cm 
\noindent It follows therefore from Claim 1 and Claim 2 that
\[
\Big\| \sum_{k\in A}\big( T(u_{i+k}) - T(v)\big)\Big\|_1\leq \sup_{C\subset\{1,\dots, n\}}\Big\|\sum_{k\in C}(u_{i+k}-v)\Big\|_1.
\]
This proves that $T$ is $\mathfrak{cm}$-nonexpansive. 
\end{proof}


\medskip 


\medskip\noindent
{\bf Acknowledgements.} We warmly thank Prof. B. Turett who pointed out a serious flaw in the first version of this manuscript which was first written in 2023. His considerations led us to envision the current version of the work. The author would like to dedicate this work to the memory of W. A. Kirk, whom he had the pleasure of meeting at the JMM-AMS conference in Baltimore, Maryland, USA, 2019. 

\medskip 

\begin{bibsection}
\begin{biblist}

\bib{AKT}{article}{
author={Argyros, S. A.},
author={Kanellopoulos, V.},
author={Tyros, K.},
title={Higher order spreading models},
journal={Fundamenta Mathematicae},
volume={221},
date={2013},
pages={23\ndash 68},
}

\bib{Ak-K}{book}{
  author={Aksoy, A. G.},
  author={Khamsi, M. A.},
  title={Nonstandard methods in fixed point theory,},
  publisher={Springer-Verlag},
  year={1990},
  }

\bib{AK}{book}{
   author={Albiac, F.},
   author={Kalton, N. J.}, 
   title={Topics in Banach space theory,}
   series={Grad. Texts Math. {\bf 233}},
   publisher={Springer-Verlag}, 
   year={2006},
   } 

\bib{Als}{article}{
author={Alspach, D.},
title={A fixed point free nonexpansive map,},
journal={Proc. Amer. Math. Soc.},
volume={82},
date={1981},
pages={423\ndash 424},
}

\bib{AminiH}{article}{
author={Amini-Harandi, A.},
title={A fixed point result in strictly convex Banach spaces},
journal={Acta Math. Hungar.},
volume={105},
date={2004},
pages={139\ndash 143},
}

\bib{AG01}{article}{
  author={Argyros, S. A.},
  author={Gasparis, I.},
  title={Unconditional structures of weakly null sequences,},
  journal={Trans. Amer. Math. Soc.},
  volume={353},
  date={2001},
  pages={2019\ndash 2058},
}

\bib{AM}{article}{
  author={Astashkin, S. V.},
  author={Maligranda, L.},
  title={Ces\`aro function spaces fail the fixed point property,},
  journal={Proc. Amer. Math. Soc.},
  volume={136},
  date={2008},
  pages={4289\ndash 4294},
  }

\bib{BaSk}{article}{
  author={Banach, S.},
  author={Saks, S.}
  title={Sur la convergence forte dans le champ $L^p$,},
  journal={Stud. Math.},
  volume={2},
  date={1930},
  pages={51--57},
}

\bib{B}{book}{
  author={Banach, S.},
  title={Th\'eorie des op\'erations lin\'eaires,},
  publisher={PWN, Warsaw},
  year={1932},
  }

\bib{Beau}{article}{
  author={Beauzamy, B.},
  title={Banach-Saks properties and spreading models,},
  journal={Math. Scand.},
  volume={44},
  date={1979},
  pages={357\ndash 384},
}

\bib{BL}{book}{
  author={Beauzamy, B.},
  author={Laprest\'e, J.-T.},
  title={Mod\`eles \'etal\'es des espaces de Banach},
  series={Travaux en Cours.},
  publisher={Hermann, Paris,},
  year={1984},
  pages={iv+210},
}

\bib{Beck62}{article}{
  author={Beck, A.},
  title={A convexity condition in Banach spaces and the strong law of large numbers},
  journal={Proc. Amer. Math. Soc.},
  volume={13},
  date={1962},
  pages={329--334},
}

\bib{BP}{article}{
  author={Bessaga, C.},
  author={Pe\l czy\'nski, A.},
  title={On bases and unconditional convergence of series in Banach spaces,},
  journal={Studia Math.},
  volume={17},
  date={1958},
  pages={151\ndash 164},
  }

\bib{Brow65}{article}{
  author={Browder, F. E.},
  title={Nonexpansive nonlinear operators in a Banach space,},
  journal={Proc. Nat. Acad. Sci. U.S.A.},
  volume={54},
  date={1965},
  pages={1040\ndash 1044},
  }

\bib{BS}{article}{
  author={Brunel, A.},
  author={Sucheston, L.},
  title={On $B$-convex Banach spaces},
  journal={Math. Systems Theory},
  volume={7},
  date={1974},
  pages={294--299},
}

\bib{CKOW}{article}{
author={Cascales, B.},
author={Kadets, V.},
author={Orihuela, J.},
author={Wingler, E. J.},
title={Plasticity of the unit ball of a strictly convex Banach space},
journal={Revista de la Real Academia de Ciencias Exactas, F\'isicas $y$ Naturales. Serie A. Matem\'aticas},
volume={110(2)},
date={2016},
pages={723\ndash 727},
}

\bib{C-SDFJLST}{article}{
  author={Castillo-Santos, E.},
  author={Dowling, P.},
  author={Fetter, H.},
  author={Jap\'on, M.},
  author={Lennard, C.},
  author={Sims, B.},
  author={Turett, B.},
  title={Near-infinity concentrated norms and the fixed point property for nonexpansive maps on closed, bounded, convex sets},
  journal={J. Funct. Anal.},
  volue={275},
  date={2018},
  pages={559\ndash 576},
  }
  
\bib{Benavides1}{article}{
  author={Dom\'inguez Benavides, T.},
  title={A renorming of some nonseparable Banach spaces with the fixed point property,},
  journal={J. Math. Anal. Appl.},
  volume={350},
  date={2009},
  pages={525\ndash 530},
  }
  
\bib{D-BJ2}{article}{
  author={Dom\'inguez Benavides, T.},
  author={Jap\'on, M.},
  title={}
  journal={J. Funct. Anal.},
  }
  
\bib{Dowl}{article}{
author={Dowling, P. N.},
title={On a fixed point result of Amini-Harandi is strictly convex Banach spaces},
journal={Acta. Math. Hungar.},
volume={112 (1-2)},
date={2006},
pages={85\ndash 88},
}
  
\bib{DLT}{article}{
  author={Dowling, P. N.},
  author={Lennard, C. J.}, 
  author={Turett, B.},
  title={Weak compactness is equivalent to the fixed point property in $\co$,},
  journal={Proc. Amer. Math. Soc.},
  volue={132},
  date={2004},
  pages={1659\ndash 1666},
  }

\bib{FHHMZ}{book}{
  author={Fabian, M.},
  author={Habala, P.},
  author={H\'ajek, P.},
  author={Montesinos, V.},
  author={Zizler, V.},
  title={Banach space theory: the basis for linear and nonlinear analysis,},
  series={CMS Books Math.},
  publisher={Springer, New York,},
  year={2011},
}

\bib{FT}{article}{
author={Flores, J.},
author={Tradacete, P.},
title={Factorization and domination of positive Banach-Saks operators},
journal={Studdia Math.},
volume={189},
date={2008},
pages={91\ndash 101},
}

\bib{GF92}{article}{
  author={Garc\'ia-Falset, J.},
  title={Basis and fixed points for nonexpansive mappings},
  journal={Rad. Mat.},
  volume={8},
  date={1992},
  pages={67--75},
}

\bib{GF97}{article}{
  author={Garc\'ia-Falset, J.},
  title={The fixed point property in Banach spaces with the NUS-property,},
  journal={J. Math. Anal. Appl.},
  volume={215},
  year={1997},
  pages={532\ndash 542},
  }

\bib{GFFN}{article}{
  author={Garc\'ia-Falset, J.},
  author={Llorens Fuster, E.},
  author={Mazcu\~n\'an Navarro, E. M.},
  title={The fixed point property and normal structure for some $B$-convex Banach spaces},
  journal={Bull. Austral. Math. Soc.},
  volume={63},
  year={2001},
  pages={75--81},
}

\bib{GF-JM-LF}{book}{
  author={Garc\'ia-Falset, J.},
  author={Jim\'enez-Melado, A.},
  author={Llorens-Fuster, E.},
  title={Stability of the fixed point property for nonexpansive mappings, in},
  series={Handbook of Metric Fixed Point Theory},
  publisher={Kluwer Acad. Publ., Dordrecht},
  year={2001},
  pages={pp. 201 \ndash 238},
}

\bib{GF-LF-MN2}{article}{
  author={Garc\'ia Falset, J.},
  author={Llorens Fuster, E.},
  author={Mazcu\~n\'an Navarro, E. M.},
  title={Uniformly non-square Banach spaces have the fixed point property for nonexpansive mappings,},
  journal={J. Funct. Anal.},
  volue={233},
  date={2006},
  pages={494\ndash 514},
  }

\bib{Giesy66}{article}{
  author={Giesy, D. P.},
  title={On a convexity condition in normed linear spaces},
  journal={Trans. Amer. Math. Soc.},
  volume={125},
  date={1966},
  pages={114\ndash 146},
}

\bib{GJ73}{article}{
  author={Giesy, D. P.},
  author={James, R. C.},
  title={Uniformly non-$\ell^1$ and $B$-convex Banach spaces},
  journal={Stud. Math.},
  volume={XLIII},
  date={1973},
  pages={61\ndash 69},
}

\bib{Goe}{article}{
  author={Goebel, K.},
  title={On the structure of the normal invariant sets for nonexpansive mappings,},
  journal={Ann. Univ. Mariae Curie-Sk\l odowska,},
  volume={29},
  date={1975},
  pages={70\ndash 72},
}

\bib{Goh65}{article}{
  author={G\"ohde, D.}, 
  title={Zum Prinzip der Kontraktiven Abbildung (German)},
  journal={Math. Nachr.},
  volume={30},
  date={1965},
  pages={251\ndash 258},
  }

\bib{GM}{article}{
  author={Gowers, W. T.},
  author={Maurey, B.},
  title={The unconditional basic sequence problem},
  journal={J. Amer. Math. Soc.},
  volume={6},
  date={1993},
  pages={851\ndash 874},
}

\bib{James64}{article}{
  author={James, R. C.},
  title={Uniformly non-square Banach spaces},
  journal={Ann. of Math.},
  volume={80},
  date={1964},
  pages={542\ndash 550},
}

\bib{James72}{article}{
  author={James, R. C.},
  title={Super-reflexive Banach spaces,},
  journal={Canad. J. Math.},
  volume={24},
  date={1972},
  pages={896\ndash 904},
  }

\bib{James74}{article}{
  author={James, R. C.},
  title={A nonreflexive Banach space that is uniformly nonoctahedral},
  journal={Israel J. Math.},
  volume={18},
  date={1974},
  pages={145\ndash 155},
}

\bib{James78}{article}{
  author={James, R. C.},
  title={Nonreflexive spaces of type $2$},
  journal={Israel J. Math.},
  volume={30},
  date={1978},
  pages={1\ndash 13},
}

\bib{KZ1}{article}{
author={Kadets, V.},
author={Zavarzina, O.},
title={Plasticity of the unit ball of $\ell_1$},
journal={In: Visnyk of V. N. Karazin Kharkiv National University. Ser. Mathematics , Applied Mathematics and Mechanics},
volume={83},
date={2016},
pages={4\ndash 9},
}

\bib{KZ2}{article}{
author={Kadets, V.},
author={Zavarzina, O.},
title={Nonexpansive bijections to the unit ball of the $\ell_1$-sum of strictly convex Banach spaces},
journal={Bulletin of the Australian Math. Soc.},
volume={97},
date={2018},
pages={285\ndash 292},
}

\bib{KSims}{book}{
  author={Khamsi, M. A.},
  author={Sims, B.},
  title={Ultra-methods in metric fixed point theory,},
  series={in: Handbook of Metric Fixed Point Theory, W. A. Kirk and B. Sims, Eds.,},
  publisher={Kluwer Acad. Publ., Dordrecht},
  year={2001},
  pages={pp. 177 \ndash 199},
  }

\bib{Kaku}{article}{
  author={Kakutani, S.},
  title={Weak convergence in uniformly convex spaces},
  journal={Tohoku Math. J.},
  volume={45},
  date={1939},
  pages={188\ndash 193},
}

\bib{Kar}{article}{
  author={Karlovitz, L. A.},
  title={On nonexpansive mappings,},
  journal={Proc. Amer. Math. Soc.},
  volume={55},
  date={1976},
  pages={321\ndash 325},
}

\bib{Ki65}{article}{
  author={Kirk, W. A.},
  title={A fixed point theorem for mappings which do not increase distances,},
  journal={Amer. Math. Monthly},
  volume={72},
  date={1965},
  pages={1004\ndash 1006},
  }

\bib{Lin}{article}{
  author={Lin, P. K.},
  title={Unconditional bases and fixed points of nonexpansive mappings,},
  journal={Pacific J. Math.},
  volume={116},
  date={1985},
  pages={69\ndash 76},
}

\bib{Lin2}{article}{
  author={Lin, P. K.},
  title={There is an equivalent norm on $\ell_1$ that has the fixed point property,},
  journal={Nonlinear Anal.},
  volume={68},
  date={2008},
  pages={2303\ndash 2308},
  }
  
\bib{L-ART}{article}{
author={Lopez-Abad, J.},
author={Ruiz, C.},
author={Tradacete, P.},
title={The convex hull of a Banach-Saks set},
journal={J. Funct. Anal.},
volume={266(4)},
dates={2014},
pages={2251\ndash 2280},
}
  
\bib{Manki}{article}{
author={Mankiewicz, P.},
title={On extension of isometries in normed linear spaces},
journal={Bull. Acad. Polon. Sci., S\'er. Sci. Math. Astronom. Phys.},
volume={20}
date={1972},
pages={367\ndash 371},
}

\bib{M-R}{article}{
  author={Maurey, B.},
  author={Rosenthal, H.P.},
  title={Normalized weakly null sequences with no unconditional subsequence,},
  journal={Stud. Math.},
  volume={61},
  date={1977},
  pages={77\ndash 98},
}

\bib{M}{article}{
  author={Maurey, B.},
  title={Points fixes des contractions de certains faiblement compacts de $L^1$,}
  journal={in: Seminaire d'Analyse Fonctionelle, 1980--1981, Centre de Math\'ematiques, \'Ecole Polytech, Palaiseau,}
  date={1981}
  page={19pp},
  }
  

\bib{NW}{article}{
  author={Nishiura, T.},
  author={Waterman, D.}
  title={Reflexivity and summability,},
  journal={Stud. Math.},
  volume={23},
  date={1963},
  pages={53 \ndash 57},
}

\bib{Ros}{article}{
  author={Rosenthal, H. P.},
  title={Weakly independent sequences and the Banach-Saks property,},
  journal={in: Proceedings of the Durham Symposium on the relations between infinite dimensional and finite dimensional convexity},
  volume={-- July},
  date={1975},
  pages={p. 26},
}

\bib{Singer}{book}{
  author={Singer, I.},
  title={Bases in Banach Spaces II,},
  publisher={Springer, New York},
  year={1981},
  }
  
\bib{Szlenk}{article}{
author={Szlenk, W.},
title={Sur les suites faiblement convergentes dans l'espace L.},
journal={Studia Maht.},
volume={2},
date={1930},
pages={58\ndash 62},
}
  
\bib{W}{article}{
  author={Wi\'snicki, A.},
  title={On the fixed points of nonexpansive mappings in direct sums of Banach spaces,}
  journal={Stud. Math.},
  volume={207},
  date={2011},
  pages={75\ndash 84},
  }

\end{biblist}
\end{bibsection}

\end{document}